\documentclass{amsart}
\usepackage{dsfont}
\usepackage{mathrsfs}
\usepackage{color}
\usepackage{amssymb}
\newtheorem{theorem}{Theorem}[section]
\newtheorem{lemma}[theorem]{Lemma}
\usepackage[nosort]{cite}
\theoremstyle{definition}
\newtheorem{definition}{Definition}[section]
\newtheorem{example}[theorem]{Example}

\newtheorem{proposition}{Proposition}[section]

\theoremstyle{remark}

\newtheorem{remark}{Remark}[section]

\numberwithin{equation}{section}

%    Absolute value notation

%    Blank box placeholder for figures (to avoid requiring any
%    particular graphics capabilities for printing this document).

\makeatletter

\newcommand{\Rmnum}[1]{\expandafter\@slowromancap\romannumeral #1@}
\makeatother

\begin{document}

\title{ Topological pressure of free semigroup actions for non-compact sets and Bowen's equation, \uppercase\expandafter{\romannumeral2}}

%    Information for first author

\author{Qian Xiao}
\address{School of Mathematics, South China University of Technology, Guangzhou 510641, P.R. China}
\email{qianqian1309200581@163.com}

%    Information for second author
\author{Dongkui Ma*}
\thanks{* Corresponding author}
\address{School of Mathematics, South China University of Technology,
Guangzhou 510641, P.R. China}
\email{dkma@scut.edu.cn}

%    General info
\subjclass[2010]{37B40, 37C45, 37C85}

%\date{January 1, 2001 and, in revised form, June 22, 2001.}

%\dedicatory{This paper is dedicated to our advisors.}

\keywords{Free semigroup actions, Topological pressure, Hausdorff dimension, Bowen's equation.}

\begin{abstract}
 Inspired to the work of Ma and Wu\cite{Ma} and Climenhaga\cite{Climenhaga}, we introduce the new nation of topological pressure of a semigroup of maps by using the  Carath\'{e}odory-Pesin structure (C-P structure) with respect to arbitrary subset in this paper. Moreover,  by Bowen's equation, we characterize the Hausdorff dimension of an arbitrary subset, where the points of the subset have the positive lower Lyapunov exponents and satisfy a so called tempered contraction condition.
\end{abstract}

\maketitle

\section{ \emph{Introduction}}
 Topological pressure, as a natural extension of topological entropy, plays a fundamentable role in dynamical systems.
 %The topological pressure is a key notion in the thermodynamic formalism which is the main tool in studying dimension of in invariant sets and measures for dynamical systems and dimension of Cantor-like sets in dimension theory.
 Ruelle \cite{Ruelle} and Walters \cite{Walters} brought the concept of topological pressure and it was further developed by Pesin and Pitskel \cite{Pesin,Pesin1}.
Pesin \cite{Pesin} use the ``dimension" approach to the notion of topological pressure which is based on the Carath\'{e}odory structure.
which we called it Carath\'{e}odory-Pesin structure (namely, C-P structure). It is a very powerful tool to study dimension theory and dynamical systems.

Bowen's equation plays a key role in dynamical systems, which is a bridge between fractals and dynamical systems.
For a long time, people have done a lot of research on Bowen's equation.
Bowen\cite{Bowen3} first investigated the relationship between topological pressure and Hausdorff dimension. He showed that, for a certain compact set $J \subset \mathbb{C}$ which is invariant set of fractional linear transformations $f$ of the Riemann sphere, the Hausdorff dimension $t=dim_{H} J$ is the unique root of the equation (namely, Bowen's equation)
\begin{equation}
P_{J}(-t\varphi)=0,
\end{equation}
where $P_{J}$ is the topological pressure of the map $f: J \rightarrow J$, and $\varphi(z)=\log |f'(z)|$.
When $f$ is a $C^{1+\varepsilon}$ conformal map on a Riemann manifold and $J$ is a repeller, Ruelle\cite{Ruelle} showed that Bowen's equation(1.1) yields the Hausdorff dimension of $J$.
Gatzouras and Peres\cite{Gatzouras} further extended the result to the case where $f$ is $C^{1}$.
Rugh\cite{Rugh} redefined the conformal map and confirmed that the result remains valid as $X$ is a metric space (not necessarily a manifold).
Climenhaga \cite{Climenhaga} characterized the relationship between the Hausdorff dimension of certain sets $Z$ and topological pressure on $Z$, whose point has positive lower Lyapunov exponents and satisfies tempered contraction condition.
 By the Bowen's equation, there exists extensive literature that studied the settings of Julia sets and non-conformal repellers (see, for example,\cite{Ban,Cao,Denker,Mayer,Mayer1,Przytycki,Przytycki1,Urbanski1,Urbanski2,Urbanski3}).

The above related results are focused on a single maps. Later, Bufetov \cite{Bufetov}and Bi\'{s}\cite{Bis} respectively proposed two kinds of topological entropies to a semigroup action generated by finite continuous maps.
Moreover, Ma and Wu\cite{Ma} adopt C-P structure to derive the idea of topological entropy and topological pressure on any subsets, which generalized the definition of Bi\'{s}\cite{Bis}.
Later, Lin et al\cite{Lin} provided the notion of topological pressure for a free semigroup action on a compact metric space, which is extended the result of  Bufetov \cite{Bufetov}.
Then Ju et al. \cite{Ju} extended this concept to arbitrary subsets by using C-P structure and gave some properties and applications.
On the basis of the notion of Ju, Ma and Wang\cite{Ju}, Xiao and Ma \cite{Xiao} introduced the concept of topological pressure of free semigroup actions on arbitrary subsets. Furthermore, inspired by the results of Climenhaga\cite{Climenhaga}, Xiao and Ma \cite{Xiao}  gave the relationship between the topological pressure on $Z \subset X$ and the Hausdorff  dimension of a certain subset $Z$, whose points have positive lower Lyapunov exponents and satisfy a tempered contraction condition.

%A question arises naturally whether there is certain relation between Hausdorff dimension and the topological pressure proposed by Ma and Wu \cite{Ma}.
Motivated by Ma and Wu \cite{Ma}, we give the new notion of topological pressure of a semigroup of continuous maps with finite functions on arbitrary subset of $X$ by using C-P structure.
Meanwhile, we also obtain some properties of it.
Furthermore, in the meaning of the new topological pressure defined in this paper, we give the relationship between the Hausdorff dimension of a certain subset $Z$ and the topological pressure on $Z \subset X$ by Bowen equation.

The paper is organized as follows. In section 2, we review some preliminaries. In section 3, we give the new definitions of the topological pressure and lower and upper capacity topological pressures of a semigroup by using the C-P structure. Moreover, the equivalent definition is given in an alternative way and several of their properties are provided. In section 4, we show the Theorem 4.1.

\section{\emph{Preliminaries}}

Let $(X,d)$ be a compact metric space. Consider a semigroup $G$ of continuous transformations of $X$ into itself.
The semigroup $G$ is assumed to be finitely generated, namely, there exists a finite set $G_{1}=\{id_{X},f_{0},f_{1},\cdots,f_{k-1}\}$ such that $G=\bigcup_{n \in \mathbb{N}}G_{n}$, where $G_{n}=\{g_{1}\circ\cdots \circ g_{n}:g_{1},\cdots,g_{n} \in G_{1}\}$.
Obviously, $G_{m} \subset G_{n}$ for all $m \leq n$.

Let $F_{k}^+$ be the set of all finite words of alphabet $\{0,1,2,\ldots,k-1\}$. For any $w \in F_{k}^+$, $ |w|$ stands for the length of $w$, that is, the number of symbols in $w$.
Denote $F_{k}^+(n)=\{w\in F_{k}^+: |w|=n, n \in \mathbb{N}\}$.
Obviously, $F_{k}^+$ with respect to the law of composition is a free semigroup with $k$ generators.
We write ${w}^{\prime}\leq{w}$ if there exists a word $w^{\prime\prime}\in F_{k}^+$ such that $w=w^{\prime\prime}w^{\prime}$.
For $w=i_1\ldots i_n \in F_{k}^{+}$, denote $\overline{w}=i_n\ldots i_1$.
Given a real number $\delta > 0$, ~$w\in F_{k}^+$ and a point $x\in X$, define the $(w,\delta)$-Bowen ball at $x$ by
\[
B_{w}(x,\delta)=\{y \in X : d(f_{w'}(x),f_{w'}(y)) < \delta, ~{\rm for} ~w' \le \overline{w} \}.
\]

Let $ (X,d) $ be a metric space and $\mathcal{P}(Z,r)$ denote the collection of countable open covers $\{U_{i}\}_{i=1}^{\infty}$ of $Z$ for which $diam U_{i} < r$ for all $i$. Given a subset $ Z \subset X, t\geq 0$, define
\[
\mathcal{H}(Z,t, r)=\inf_{\mathcal{P}(Z,r)}\left\{\sum\limits_{U_{i} \in \mathcal{P}(Z,r)} ({\rm diam}(U_{i}))^t \right\}.
\]
As $r$ decreases, $\mathcal{H}(Z,t, r)$ increases. Therefore, there exists the limit
\[
\mathcal{H}(Z,t)=\lim\limits_{r\rightarrow 0} \mathcal{H}(Z,t, r).
\]
We call $\mathcal{H}(Z,t)$ the \emph {t-dimensional Hausdorff measure} of $Z$.
Moreover, there is a critical value of $t$ at which $\mathcal{H}(Z,t)$ jumps from $\infty$ to $0$, which is called the \emph {Hausdorff dimension} of $Z$. For details, see \cite{Falconer}. Formally,
\[
dim_{H}(Z)=\inf\{t >0: \mathcal{H}(Z,t)=0\}=\sup\{t>0 : \mathcal{H}(Z,t)=\infty\}.
\]

One may equivalently define Hausdorff dimension of using covers by open balls rather than arbitrary open sets. Let $\mathcal{P}^{b}(Z,r)$ denote the collection of countable open balls covers $\{B(x_{i},r_{i})\}_{i=1}^{\infty}$ of $Z$ with $x_{i}\in Z$ and $r_{i} < r$ for all $i$, and then define
\[
\mathcal{H}^{b}(Z,t,r)=\inf_{\mathcal{P}^{b}(Z,r)}\left\{\sum\limits_{B(x_{i},r_{i}) \in \mathcal{P}^{b}(Z,r)} ({\rm diam} B(x_{i},r_{i}))^t\right\}.
\]
Finally, using the same procedure as above we can also define $\mathcal{H}^{b}(Z,t)$ and $dim_{H}^{b}Z$. Furthermore, we have known $dim_{H}^{b}Z=dim_{H}Z$ from \cite{Climenhaga, Falconer}.

\section{\emph{Topological pressure and lower and upper capacity topological pressures of a free semigroup action and their properties}}

\subsection{Topological pressure and lower and upper capacity topological pressures}

Let $X$ be a compact metric space with metric $d$.
Given $\varphi_{0},\cdots,\varphi_{k-1} \in C(X,\mathbb{R})$, denote $\Phi=\{\varphi_{0},\cdots,\varphi_{k-1} \}$.
Let $w=i_1i_2 \ldots i_n \in F_{k}^+$ and $f_w=f_{i_1}\circ f_{i_2}\circ \ldots \circ f_{i_n}$, where $i_j=2,\ldots,k$ for all $j=1,\ldots,n$ . Obviously, $f_{ww'}=f_wf_{w'}$.
For $w=i_{1}i_{2}\cdots i_{n} \in F_{k}^{+}$, denote
\[
S_{w}\Phi(x):=\varphi_{i_{1}}(x)+\varphi_{i_{2}}(f_{i_{1}}(x))+\cdots+\varphi_{ i_{n}}(f_{i_{n-1}i_{n-2}\cdots i_{1}}(x)).
\]

Define a new metric $d_{n}$ on $X$ by
\[
d_{n}(x,y)=\max \left\{d(g(x),g(y): g \in G_{n}\right\}.
 \]
Clearly, if $n_{1} \leq n_{2}$, then $d_{n_{1}}(x_1,x_2) \leq d_{n_{2}}(x_1,x_2)$ for all $x_1,x_2 \in X$.

Considering a real number $\delta > 0$ and a point $x\in X$, we can define the $(n,\delta)$-Bowen ball at $x$ by
\[
B_{n}(x,\delta)=\{y \in X : d_{n}(x,y) < \delta \}.
\]

Given $ Z\subset X, \alpha \in \mathbb{R}$ and $N>0$, define
\begin{align*}
M(Z,G_{1},\Phi,\alpha,\delta,N):
=\inf\limits_{\mathcal{G} }\left\{\sum\limits_{B_{n}(x,\delta)\in\mathcal{G} }\exp\left(-\alpha\cdot n+\sup_{y \in B_{n}(x,\delta)} \frac{1}{k^{n}} \sum_{|w|=n}S_{w}\Phi(y) \right)\right\},
\end{align*}
where the infimum runs over all finite or countable subcollections $\mathcal{G}=\{B_{n}(x,\delta)\}$ such that $x \in X, ~n\geq N$ and $\bigcup\limits_{B_{n}(x,\delta) \in \mathcal{G}} B_{n}(x,\delta) \supset Z.$

We can easily verify that the function $M(Z,G_{1},\Phi,\alpha,\delta,N)$ is non-decreasing as $N$ increases.
Therefore, there exists the limit
\[
m(Z,G_{1},\Phi,\alpha,\delta)=\lim\limits_{N\to\infty}M(Z,G_{1},\Phi,\alpha,\delta,N).
\]

Moreover, we can also define
\begin{align*}
R(Z,G_{1},\Phi,\alpha,\delta,N)
&=\inf\limits_{\mathcal{G}_{N}}\left\{\sum\limits_{B_{N}(x,\delta) \in \mathcal{G}_{N}}\exp\left(-\alpha\cdot N+\sup_{y \in B_{N}(x,\delta)} \frac{1}{k^{N}} \sum_{|w|=N}S_{w}\Phi(y) \right)\right\},\\
\end{align*}
where the infimum is taken over all finite or countable subcollections $\mathcal{G}_{N}$ covering $Z$( i.e., $\bigcup\limits_{B_{N}(x,\delta) \in \mathcal{G}_{N}} B_{N}(x,\delta) \supset Z$).

Let
\[
\underline{r}(Z,G_{1},\Phi,\alpha,\delta)=\liminf_{N\to\infty}R(Z,G_{1},\Phi,\alpha,\delta,N),
\]
\[
\overline{r}(Z,G_{1},\Phi,\alpha,\delta)=\limsup_{N\to\infty}R(Z,G_{1},\Phi,\alpha,\delta,N).
\]

By the construction of C-P structure (for details see \cite{Pesin}), there exist unique critical values respectively such that the quantities $m(Z,G_{1},\Phi,\alpha,\delta)$, $\underline{r}(Z,G_{1},\Phi,\alpha,\delta)$,\\
$\overline{r}(Z,G_{1},\Phi,\alpha,\delta)$ jump from $\infty$ to $0$. We denote them as $P_{Z}(G_{1},\Phi,\delta), \underline{CP}_{Z}(G_{1},\Phi,\delta),$ and $\overline{CP}_{Z}(G_{1},\Phi,\delta)$ respectively. Accordingly, we have that
\[
P_{Z}(G_{1},\Phi,\delta)=\inf\{\alpha: m(Z,G_{1},\Phi,\alpha,\delta)=0\}=\sup\{\alpha: m(Z,G_{1},\Phi,\alpha,\delta)=\infty\},
\]
\[
\underline{CP}_{Z}(G_{1},\Phi,\delta)=\inf\{\alpha: \underline{r}(Z,G_{1},\Phi,\alpha,\delta)=0\}=\sup\{\alpha: \underline{r}(Z,G_{1},\Phi,\alpha,\delta)=\infty\},
\]
\[
\overline{CP}_{Z}(G_{1},\Phi,\delta)=\inf\{\alpha: \overline{r}(Z,G_{1},\Phi,\alpha,\delta)=0\}=\sup\{\alpha: \overline{r}(Z,G_{1},\Phi,\alpha,\delta)=\infty\}.
\]

Similar to the argument of Pesin\cite{Pesin} and Ma and Wu \cite{Ma}, the limits $\lim\limits_{\delta\rightarrow 0}P_{Z}(G_{1},\Phi,\delta)$, $\lim\limits_{\delta\rightarrow 0}\underline{CP}_{Z}(G_{1}$,$\Phi,\delta),\lim\limits_{\delta\rightarrow 0}\overline{CP}_{Z}(G_{1},\Phi,\delta)$ exist respectively.

\begin{definition}\label{4.1t}
For any set $Z \subset X$, we call the following quantities:
 \[
 P_{Z}(G_{1},\Phi):=\lim\limits_{\delta\rightarrow 0}P_{Z}(G_{1},\Phi,\delta),
 \]
 \[
 \underline{CP}_{Z}(G_{1},\Phi):=\lim\limits_{\delta\rightarrow 0}\underline{CP}_{Z}(G_{1},\Phi,\delta),
 \]
 \[
 \overline{CP}_{Z}(G_{1},\Phi):=\lim\limits_{\delta\rightarrow 0}\overline{CP}_{Z}(G_{1},\Phi,\delta),
 \]
 the topological pressure and lower and upper capacity topological pressures on the set $Z$ respectively.
\end{definition}

\begin{remark}
 (1) It is easy to see that $P_{Z}(G_{1},\Phi)\le \underline{CP}_{Z}(G_{1},\Phi)\le \overline{CP}_{Z}(G_{1},\Phi)$.

 (2) If $\varphi_{0}=\cdots=\varphi_{k-1}=\varphi$, it is easy to verify $P_{Z}(G_{1},\Phi)\leq P_{Z}(G_{1},\varphi)$, where $P_{Z}(G_{1},\varphi)$ is defined by Ma and Wu \cite{Ma}.

 (3) When $\varphi_{0}=\cdots=\varphi_{k-1}=0$, it is the topological entropy $h_{Z}(G_{1})$ defined by Ma and Wu \cite{Ma}.
\end{remark}

\subsection{Equivalent definition by using the center of Bowen's ball}\

Given $ Z\subset X, \alpha \in \mathbb{R}$ and $N>0$, we can define new functions as follows
\[
M'(Z,G_{1},\Phi,\alpha,\delta,N)=\inf\limits_{\mathcal{G} }\left\{\sum\limits_{B_{n}(x,\delta)\in\mathcal{G} }\exp\left(-\alpha\cdot n+ \frac{1}{k^{n}} \sum_{|w|=n}S_{w}\Phi(x) \right)\right\}
\]
where the infimum runs over all finite or countable subcollections $\mathcal{G}=\{B_{n}(x,\delta): x \in Z, n \geq N\}$ such that $\bigcup\limits_{B_{n}(x,\delta) \in \mathcal{G}} B_{n}(x,\delta) \supset Z,$
 and
\[
R'(Z,G_{1},\Phi,\alpha,\delta,N)=\inf\limits_{\mathcal{G}_{N}}\left\{\sum\limits_{B_{N}(x,\delta) \in \mathcal{G}_{N}}\exp\left(-\alpha\cdot N+ \frac{1}{k^{N}} \sum_{|w|=N}S_{w}\Phi(x) \right)\right\},
\]
where the infimum runs over all finite or countable subcollections $\mathcal{G}_{N}=\{B_{N}(x,\delta): x \in Z\}$ such that $\bigcup\limits_{B_{n}(x,\delta) \in \mathcal{G}} B_{n}(x,\delta) \supset Z.$

By the same procedure as above, we can get $m'(Z,G_{1},\Phi,\alpha,\delta),\  \underline{r'}(Z,G_{1},\Phi,\alpha,\delta)$ and $\overline{r'}(Z,G_{1},\Phi,\alpha,\delta)$ respectively. Finally, for any $Z\subset X$, we denote the respective critical values by
\[
P'_{Z}(G_{1},\Phi,\delta),\ \underline{CP'}_{Z}(G_{1},\Phi,\delta),\ \overline{CP'}_{Z}(G_{1},\Phi,\delta).
\]

\begin{theorem}\label{4.2t}
For any set $Z \subset X$, the following hold:
 \[P_{Z}(G_{1},\Phi)=\lim\limits_{\delta\rightarrow 0}P'_{Z}(G_{1},\Phi,\delta),\]
 \[\underline{CP}_{Z}(G_{1},\Phi)=\lim\limits_{\delta\rightarrow 0}\underline{CP'}_{Z}(G_{1},\Phi,\delta),\]
 \[\overline{CP}_{Z}(G_{1},\Phi)=\lim\limits_{\delta\rightarrow 0}\overline{CP'}_{Z}(G_{1},\Phi,\delta).\]
\end{theorem}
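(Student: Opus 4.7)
The plan is to establish a two-sided sandwich between $M$ and $M'$ (and analogously between $R$ and $R'$) in which the scale parameter $\delta$ is only slightly perturbed and the pressure parameter $\alpha$ is shifted by a quantity that vanishes as $\delta \to 0$; the equality of the three pairs of critical values then follows on letting $\delta \to 0$. Since $X$ is compact and every $\varphi_i$ is continuous, each $\varphi_i$ is uniformly continuous and
\[
\mathrm{Var}(\Phi,\delta):=\max_{0 \le i \le k-1}\sup\{|\varphi_i(u)-\varphi_i(v)| : d(u,v) < \delta\}
\]
tends to $0$ as $\delta \to 0$. This modulus is what will absorb the gap between the supremum of the relevant Birkhoff-type average over a Bowen ball and its value at the center.

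For the upper bound, fix $y \in B_n(x,\delta)$. For each word $w = i_1 \cdots i_n$ and each $j \in \{1,\dots,n\}$, the composition $f_{i_{j-1} \cdots i_1}$ (interpreted as $\mathrm{id}_X$ when $j=1$) lies in $G_{j-1} \subset G_n$, so $d_n(x,y) < \delta$ yields $|\varphi_{i_j}(f_{i_{j-1} \cdots i_1}(y))-\varphi_{i_j}(f_{i_{j-1} \cdots i_1}(x))| \le \mathrm{Var}(\Phi,\delta)$ simultaneously for every $w$ and every $j$. Summing these $n$ estimates and averaging over $|w|=n$,
\[
\sup_{y \in B_n(x,\delta)} \frac{1}{k^n} \sum_{|w|=n} S_w\Phi(y) \le \frac{1}{k^n} \sum_{|w|=n} S_w\Phi(x) + n\,\mathrm{Var}(\Phi,\delta).
\]
Since every cover with centers in $Z$ is admissible for both $M$ and $M'$, term-by-term comparison gives $M(Z,G_1,\Phi,\alpha,\delta,N) \le M'(Z,G_1,\Phi,\alpha - \mathrm{Var}(\Phi,\delta),\delta,N)$. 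For the reverse direction I use a doubling-radius argument: given any cover $\mathcal{G}$ admissible for $M(Z,G_1,\Phi,\alpha,\delta,N)$, for each $B_n(x,\delta) \in \mathcal{G}$ that meets $Z$ choose $x' \in B_n(x,\delta) \cap Z$; the triangle inequality for $d_n$ gives $B_n(x,\delta) \subset B_n(x',2\delta)$. Discarding balls disjoint from $Z$ and replacing the rest by their doubled counterparts produces a cover admissible for $M'(Z,G_1,\Phi,\alpha,2\delta,N)$ whose $M'$-weights are bounded above by the $M$-weights of $\mathcal{G}$, since $x' \in B_n(x,\delta)$ forces $\frac{1}{k^n} \sum_{|w|=n} S_w\Phi(x') \le \sup_{y \in B_n(x,\delta)} \frac{1}{k^n} \sum_{|w|=n} S_w\Phi(y)$. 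Taking infima gives $M'(Z,G_1,\Phi,\alpha,2\delta,N) \le M(Z,G_1,\Phi,\alpha,\delta,N)$.

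Passing $N \to \infty$ and reading off the critical values at which the Carath\'eodory set functions jump from $\infty$ to $0$, the two bounds translate into
\[
P'_Z(G_1,\Phi,2\delta) \le P_Z(G_1,\Phi,\delta) \le P'_Z(G_1,\Phi,\delta) + \mathrm{Var}(\Phi,\delta).
\]
Letting $\delta \to 0$, both outer expressions converge to $\lim_{\delta \to 0} P'_Z(G_1,\Phi,\delta)$ while $\mathrm{Var}(\Phi,\delta) \to 0$, and the squeeze gives the first equality. The analogous statements for $\underline{CP}$ and $\overline{CP}$ follow by applying exactly the same two comparisons to $R$ and $R'$ in place of $M$ and $M'$, except that at the last step one takes $\liminf_{N \to \infty}$ or $\limsup_{N \to \infty}$ instead of the limit. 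The one delicate ingredient is the distortion bound: one has to exploit the fact that every prefix composition $f_{i_{j-1} \cdots i_1}$ appearing in any word $w$ of length $n$ already belongs to $G_n$, so that the single condition $d_n(x,y) < \delta$ controls all the $n k^n$ orbit comparisons inside $\frac{1}{k^n} \sum_{|w|=n}$ \emph{uniformly} in $w$; this uniformity is what makes the error $n\,\mathrm{Var}(\Phi,\delta)$ absorbable into $\alpha$ after dividing by $n$.
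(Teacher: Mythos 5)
Your proof is correct and follows essentially the same strategy as the paper: introduce the uniform modulus of continuity, prove a two-sided comparison between $M$ and $M'$ (and likewise $R$ and $R'$) in which one direction perturbs $\alpha$ by the modulus and the other replaces centers with points of $Z$ at the cost of a fixed factor on the radius, then squeeze as $\delta\to 0$. Your reverse bound is, if anything, marginally cleaner than the paper's, which also inserts an $\varepsilon(\delta)$ correction there even though $\sup_{y\in B_n(x,\delta)}\frac{1}{k^n}\sum_{|w|=n}S_w\Phi(y)$ already dominates the value at any chosen $x'\in B_n(x,\delta)\cap Z$.
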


\begin{proof}
Our approach is inspired by the work of Pesin in \cite{Pesin}. Firstly, given $\delta > 0$, let
\[
\varepsilon(\delta):=\sup \{|\varphi_{i}(x)-\varphi_{i}(y)|: d(x,y)< \delta, i=0,1,\cdots,k-1\}.
\]
Owing to the fact that $\varphi_{i} \in C(X,\mathbb{R})$ and $X$ is compact, $\varphi_{i}$ is uniformly continuous. Hence $\varepsilon(\delta)$ is finite and
$\lim\limits_{\delta \rightarrow 0}\varepsilon(\delta)=0$. Moreover, for any $y \in B_{n}(x, \delta),~|w|=n$, we have
\[
\left|S_{w}\Phi(x)-S_{w}\Phi(y)\right| \leq n\varepsilon(\delta).
\]

Consider the collection $\mathcal{G}=\{B_{n}(x,\delta): x \in Z, n\geq N\}$ such that $\bigcup\limits_{B_{n}(x,\delta) \in \mathcal{G}} B_{n}(x,\delta) \supset Z$, then
\begin{align*}
M(Z,G_{1},\Phi,\alpha,\delta,N)
& \leq \sum\limits_{B_{n}(x, \delta)\in\mathcal{G}}\exp\left(-\alpha n+\sup_{y \in B_{n}(x, \delta)} \frac{1}{k^{n}} \sum_{|w|= n}S_{w}\Phi(y) \right)\\
&\leq \sum\limits_{B_{n}(x,\delta) \in\mathcal{G}}\exp\Big( -\alpha n+ \frac{1}{k^{n}} \sum_{|w|=n}S_{w}\Phi(x)+n\varepsilon(\delta) \Big)\\
& = \sum\limits_{B_{n}(x,\delta)\in\mathcal{G}}\exp\Big( -n(\alpha-\varepsilon(\delta))+ \frac{1}{k^{n}} \sum_{|w|=n}S_{w}\Phi(x)\Big).\\
\end{align*}
Thus we can get
\[
M(Z,G_{1},\Phi,\alpha,\delta,N) \leq  M'(Z,G_{1},\Phi,\alpha-\varepsilon(\delta),\delta,N).
\]
Taking the limit $N \rightarrow \infty$ yields
\[
m(Z,G_{1},\Phi,\alpha,\delta) \leq m'(Z,G_{1},\Phi,\alpha-\varepsilon(\delta),\delta).
\]
Therefore,
\[
P_{Z}(G_{1},\Phi,\delta) \leq P'_{Z}(G_{1},\Phi,\delta)+\varepsilon(\delta),
\]
and as $\delta \rightarrow 0$, that is, $\varepsilon(\delta)\rightarrow 0$,  we obtain
\[
P_{Z}(G_{1},\Phi) \leq \liminf_{\delta\rightarrow 0}P'_{Z}(G_{1},\Phi,\delta).
\]

Next, choosing $\mathcal{G}=\{B_{n}(x,\delta/2): x \in X, n\geq N\}$ such that $\bigcup\limits_{B_{n}(x,\delta/2) \in \mathcal{G}} B_{n}(x,\delta/2) \supset Z$, we may assume without loss of generality that for every $B_{n}(x,\delta/2) \in \mathcal{G}$, we have $B_{n}(x,\delta/2) \cap Z\neq \emptyset.$ Thus, for each such $B_{n}(x,\delta/2)$, we choose $y \in B_{n}(x,\delta/2) \cap Z$; we see that $B_{n}(x,\delta/2) \subset B_{n}(y,\delta).$ Set $\mathcal{G}'=\{B_{n}(y, \delta): y \in B_{n}(x,\delta/2) \cap Z\}$, then $\bigcup\limits _{B_{n}(y, \delta) \in \mathcal{G}'}B_{n}(y, \delta) \supset Z$ and we can get
\begin{align*}
 M(Z,G_{1},\Phi,&\alpha,\delta/2,N) \\
& =\inf\limits_{\mathcal{G} }\left\{\sum\limits_{B_{n}(x,\delta/2)\in\mathcal{G} }\exp\left(-\alpha\cdot n+\sup_{z \in B_{n}(x,\delta/2)} \frac{1}{k^{n}} \sum_{|w|=n}S_{w}\Phi(z) \right)\right\}\\
 &\geq \inf\limits_{\mathcal{G}' }\left\{\sum\limits_{B_{n}(y,\delta)\in\mathcal{G}' }\exp\left(-\alpha\cdot n+\frac{1}{k^{n}} \sum_{|w|=n}S_{w}\Phi(y)-n \varepsilon(\delta) \right)\right\}\\
  & \geq M'(Z,G_{1},\Phi,\alpha+\varepsilon(\delta),\delta,N).
\end{align*}
Hence
\[
m(Z,G_{1},\Phi,\alpha,\delta/2) \geq   m'(Z,G_{1},\Phi,\alpha+\varepsilon(\delta),\delta).
\]
As a result,
\[
P_{Z}(G_{1},\Phi,\delta/2) \geq P'_{Z}(G_{1},\Phi,\delta)-\varepsilon(\delta),
\]
and taking the limit $\delta \rightarrow 0$ gives
\[
P_{Z}(G,\Phi) \geq \limsup_{\delta\rightarrow 0}P'_{Z}(G,\Phi,\delta),
\]
which finishes the proof of the first.
The existence of the two other limits can be proved in a similar way.
\end{proof}

\subsection{Properties of topological pressure and lower and upper capacity topological pressures}\

The following basic properties of topological pressure and lower and upper capacity topological pressures of a semigroup can be verified directly from the basic properties of the Carath\'{e}odory-Pesin  dimension \cite{Pesin} and definitions.

\begin{proposition}\label{3.1p}
(1) $P_{\emptyset}(G_{1},\Phi)\le 0.$

(2) $P_{Z_{1}}(G_{1},\Phi)\le P_{Z_{2}}(G_{1},\Phi)$, ~ if~ $Z_{1}\subset Z_{2}$.

(3) $P_{Z}(G_{1},\Phi)=\sup\limits_{i\ge1}P_{Z_{i}}(G_{1},\Phi)$,~ where~$Z=\cup_{i\ge1}Z_{i}$ ~and~$Z_{i}\subset X , i=1,2,\cdots $.
\end{proposition}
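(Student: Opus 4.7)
The plan is to verify each of the three claims at the level of the underlying C-P set function $m(Z,G_{1},\Phi,\alpha,\delta)$, then read off the conclusion for the critical value $P_{Z}(G_{1},\Phi,\delta)$, and finally pass to the limit $\delta\to 0$. The arguments for $\underline{CP}_Z$ and $\overline{CP}_Z$ are strictly parallel (with $\underline{r}$ or $\overline{r}$ in place of $m$), so I would only write out the case of $P_Z$ and remark that the two capacity versions go through verbatim.

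Items (1) and (2) are essentially book-keeping. For (1), the empty collection is a valid cover of $\emptyset$ with empty sum, so $M(\emptyset,G_1,\Phi,\alpha,\delta,N)=0$ for every $\alpha,\delta,N$, hence $m(\emptyset,\ldots)=0$ for every $\alpha$; the critical value is therefore $\le 0$ for every $\delta$, and the limit $\delta\to 0$ preserves the inequality. For (2), every admissible cover of $Z_2$ by $(n,\delta)$-Bowen balls is automatically an admissible cover of $Z_1$, so the infimum defining $M(Z_1,\ldots)$ is taken over a family containing the one defining $M(Z_2,\ldots)$; thus $M(Z_1,\ldots)\le M(Z_2,\ldots)$, the inequality passes to $m$, then to the critical values (since shrinking the set function can only lower the threshold where it jumps from $\infty$ to $0$), and then to the final limit in $\delta$.

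The content is in (3). The inequality $P_Z(G_1,\Phi)\ge\sup_i P_{Z_i}(G_1,\Phi)$ is immediate from (2) applied to each $Z_i\subset Z$. For the reverse inequality, I would assume the supremum is finite (otherwise nothing to prove) and fix $\alpha>\sup_i P_{Z_i}(G_1,\Phi)$. Appealing to the standard Pesin-type monotonicity $P_{Z_i}(G_1,\Phi,\delta)\le P_{Z_i}(G_1,\Phi)$ (which in the Carath\'eodory--Pesin framework of \cite{Pesin,Ma} underpins the existence of the $\delta\to 0$ limit), it follows that $P_{Z_i}(G_1,\Phi,\delta)<\alpha$ uniformly in $i$, for every $\delta>0$; by the definition of the critical value, this gives $m(Z_i,G_1,\Phi,\alpha,\delta)=0$ for every $i$ and every $\delta>0$. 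Then, for any fixed $\delta>0$, any $\varepsilon>0$, and any $N\in\mathbb{N}$, I would choose for each $i$ a countable family $\mathcal{G}_i$ of $(n,\delta)$-Bowen balls with $n\ge N$ covering $Z_i$ whose defining sum is strictly less than $\varepsilon\cdot 2^{-i}$. The union $\mathcal{G}=\bigcup_i\mathcal{G}_i$ covers $Z$, so $M(Z,G_1,\Phi,\alpha,\delta,N)\le\varepsilon$. As $\varepsilon$ and $N$ are arbitrary, $m(Z,G_1,\Phi,\alpha,\delta)=0$, whence $P_Z(G_1,\Phi,\delta)\le\alpha$. Letting $\delta\to 0$ and then $\alpha\downarrow\sup_i P_{Z_i}(G_1,\Phi)$ closes the argument.

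The only non-routine ingredient is the monotonicity of $P_{Z_i}(G_1,\Phi,\delta)$ in $\delta$, which underwrites the uniform-in-$i$ choice of $\delta$ in the $\sigma$-stability step of (3). This is a standard feature of the Carath\'eodory--Pesin framework inherited from \cite{Pesin,Ma}; without it, one would be forced to choose different scales $\delta_i>0$ for each component $Z_i$ and feed them into a diagonal argument, which is strictly longer but still routine. Everything else in the proof is cover-bookkeeping and the definition of a critical value.
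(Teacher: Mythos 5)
The paper offers no written proof of Proposition~\ref{3.1p}; it defers to the abstract Carath\'{e}odory--Pesin machinery in \cite{Pesin}. So your write-up is not competing with a different argument but supplying the one the paper leaves implicit, and in outline it is the right one: (1) via the empty cover, (2) via the inclusion of admissible covers, (3) via outer-measure-type $\sigma$-subadditivity of $m$ at each fixed scale $\delta$ together with an exchange of $\lim_{\delta\to 0}$ and $\sup_i$.

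The one step that is not quite right as stated is the ``standard Pesin-type monotonicity'' $P_{Z_i}(G_{1},\Phi,\delta)\le P_{Z_i}(G_{1},\Phi)$. That monotonicity is transparent for the \emph{centered} quantity $P'_{Z_i}(G_{1},\Phi,\delta)$ of Subsection~3.2: shrinking $\delta$ only shrinks the admissible Bowen balls while leaving the weight $\tfrac{1}{k^{n}}\sum_{|w|=n}S_{w}\Phi(x)$ unchanged, so $m'$ can only increase and $P'_{Z_i}(G_{1},\Phi,\delta)$ is non-decreasing as $\delta\to 0$, whence $P'_{Z_i}(G_{1},\Phi,\delta)\le P_{Z_i}(G_{1},\Phi)$. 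For $P$, which is defined with $\sup_{y\in B_{n}(x,\delta)}$, shrinking $\delta$ pushes the critical value up (finer covers) \emph{and} down (smaller supremum of the potential), so there is no clean monotonicity; indeed the paper establishes $\lim_{\delta\to 0}P_{Z}(G_{1},\Phi,\delta)$ exists only via Theorem~\ref{4.2t}, not by monotonicity. The fix is cheap and does not change your structure: either run the $\sigma$-stability step entirely with $m'$ and $P'$ and invoke Theorem~\ref{4.2t} at the very end, or note that the proof of Theorem~\ref{4.2t} yields
\[
P_{Z}(G_{1},\Phi,\delta)\le P'_{Z}(G_{1},\Phi,\delta)+\varepsilon(\delta)\le P_{Z}(G_{1},\Phi)+\varepsilon(\delta),
\]
with $\varepsilon(\delta)$ depending only on the $\varphi_{i}$ and $\delta$, not on $Z$. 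That uniformity in $Z$ is exactly what lets you pick a single $\delta$ that works for every $Z_{i}$ (after enlarging $\alpha$ by $\varepsilon(\delta)$), so the rest of your argument goes through verbatim. You were right that this monotonicity is the only non-routine input; it just has to be attributed to $P'$ rather than to $P$.
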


\begin{proposition}\label{3.2p}
(1) $\underline{CP}_{\emptyset}(G_{1},\Phi)\le 0,~\overline{CP}_{\emptyset}(G_{1},\Phi)\le 0.$

(2) $\underline{CP}_{Z_{1}}(G_{1},\Phi)\le \underline{CP}_{Z_{2}}(G_{1},\Phi)$ ~and~ $\overline{CP}_{Z_{1}}(G_{1},\Phi)\le  \overline{CP}_{Z_{2}}(G_{1},\Phi)$, ~if~ $Z_{1}\subset Z_{2}$.

(3) $\underline{CP}_{Z}(G_{1},\Phi) \ge \sup\limits_{i\ge1}\underline{CP}_{Z_{i}}(G_{1},\Phi)$ and $\overline{CP}_{Z}(G_{1},\Phi) \ge \sup\limits_{i\ge1}\overline{CP}_{Z_{i}}(G_{1},\Phi)$,
where $Z=\cup_{i\ge1}Z_{i}$ ~and~$Z_{i}\subset X , i=1,2,\cdots $.

(4) If $g:X \rightarrow X$ is a homeomorphism which commutes with $G_{1}$ (i.e., $f_{i}\circ g=g \circ f_{i}$,
for all $f_{i}\in G_{1}$), then
\[
P_{Z}(G_{1},\Phi)=P_{g(Z)}(G_{1},\Phi \circ g^{-1}),
\]
\[
\underline{CP}_{Z}(G_{1},\Phi)=\underline{CP}_{g(Z)}(G_{1},\Phi \circ g^{-1}),
\]
\[
\overline{CP}_{Z}(G_{1},\Phi)=\overline{CP}_{g(Z)}(G_{1},\Phi \circ g^{-1}),
\]
where $\Phi \circ g^{-1}=\{\varphi_{0}\circ g^{-1},\cdots,\varphi_{k-1} \circ g^{-1}\}$.
\end{proposition}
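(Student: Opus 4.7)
The plan is to dispatch (1)--(3) as routine consequences of the definitions and to concentrate effort on (4). For (1), the empty subcollection vacuously covers $\emptyset$, so $M, R, \underline{r}, \overline{r}$ all vanish identically at $Z = \emptyset$ and each critical value is in particular $\le 0$. For (2), any subcollection that covers $Z_2$ at scale $\delta$ also covers the subset $Z_1 \subset Z_2$; hence $R(Z_1,\cdot) \le R(Z_2,\cdot)$, an inequality that survives $\liminf_N$ and $\limsup_N$ and passes to the critical values. Part (3) is then immediate from (2): $Z_i \subset Z$ gives $\underline{CP}_{Z_i} \le \underline{CP}_Z$ and $\overline{CP}_{Z_i} \le \overline{CP}_Z$, and taking the supremum preserves the inequality. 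The asymmetry with Proposition 3.1(3) is expected, since a fixed-scale sum defining $R$ cannot be decomposed diagonally across a countable union as can the growing-scale sum defining $M$.

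For (4), the cleanest route is to compare $(X, d, G_1, Z, \Phi)$ with $(X, d, G_1, g(Z), \Phi \circ g^{-1})$ via the pulled-back metric $\tilde d(u,v) := d(g(u), g(v))$. Extending $f_i \circ g = g \circ f_i$ to all words by induction on $|w|$, one reads off from the definitions both
\[
S_w(\Phi \circ g^{-1})(z) = S_w\Phi(g^{-1}(z)) \qquad (z \in X,\ w \in F_k^+),
\]
and the exact identification of Bowen balls
\[
\tilde B_n(x,\delta) = g^{-1}\bigl(B_n(g(x),\delta)\bigr),
\]
where $\tilde B_n$ denotes the $(n,\delta)$-Bowen ball with respect to $\tilde d$. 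Combining these, every $\tilde d$-cover $\{\tilde B_n(x_i,\delta)\}$ of $Z$ maps under $g$ to a $d$-cover $\{B_n(g(x_i),\delta)\}$ of $g(Z)$, and the exponentials in the definitions of $M, R, M', R'$ coincide term by term after the potential substitution. Hence the critical values computed with $\tilde d$ on $Z$ for $\Phi$ equal those computed with $d$ on $g(Z)$ for $\Phi\circ g^{-1}$ at every fixed scale $\delta$.

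To finish, use that $g$ and $g^{-1}$ are uniformly continuous on the compact space $X$, so $d$ and $\tilde d$ are uniformly equivalent: for every $\varepsilon > 0$ there exists $\delta > 0$ with $\tilde B_n(x,\delta) \subset B_n(x,\varepsilon)$ and symmetrically. Standard C--P comparisons then force the $d$- and $\tilde d$-pressures on $Z$ to agree in the $\delta \to 0$ limit, and chaining delivers $P_Z(G_1,\Phi) = P_{g(Z)}(G_1, \Phi\circ g^{-1})$; the same argument with $R$ in place of $M$ yields the $\underline{CP}$ and $\overline{CP}$ identities. The main obstacle is precisely this last step: the identification of $d$- and $\tilde d$-pressures is not tautological, since the modulus-of-continuity distortion produces a gap at each fixed scale that must be absorbed by the $\delta \to 0$ limit, and one must keep careful track of the direction in which a sup over a slightly enlarged Bowen ball moves the summand.
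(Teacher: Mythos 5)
The paper itself offers no proof of Proposition~\ref{3.2p}; it merely asserts that the properties follow from Pesin's Carath\'{e}odory--Pesin framework and the definitions. Your proposal therefore supplies a proof that the paper only sketches by reference, and it is correct in substance. Parts (1)--(3) are handled exactly right: the empty cover gives $R(\emptyset,\ldots)=0$ for all $\alpha$, hence the critical values are $-\infty\le 0$; a cover of $Z_2$ covers $Z_1\subset Z_2$ so $R(Z_1,\cdot)\le R(Z_2,\cdot)$ and this passes through $\liminf/\limsup$ and the critical value; (3) is monotonicity plus a supremum. Your remark that the countable-union formula of Proposition~\ref{3.1p}(3) cannot hold for $\underline{CP},\overline{CP}$ because $R$ forces a single fixed $N$ across the whole cover is a good observation.

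For (4), the pulled-back metric $\tilde d(u,v)=d(g(u),g(v))$ is a clean device. The identities $S_w(\Phi\circ g^{-1})(z)=S_w\Phi(g^{-1}z)$ and $\tilde d_n(x,y)=d_n(g(x),g(y))$, hence $\tilde B_n(x,\delta)=g^{-1}\bigl(B_n(g(x),\delta)\bigr)$, do indeed give an \emph{exact}, scale-by-scale equality between the $(\tilde d,\Phi)$-quantities on $Z$ and the $(d,\Phi\circ g^{-1})$-quantities on $g(Z)$, both for $M,M'$ and for $R,R'$. (Both identities require extending $f_i\circ g=g\circ f_i$ to all of $G_n$ by induction, as you note.) The one place where your write-up stops short of a full argument is the final comparison of the $\tilde d$-pressure and the $d$-pressure on $Z$: you flag the difficulty with the supremum over an enlarged Bowen ball, but you do not resolve it. The cleanest fix is to work throughout with the center-based quantities $M'$ and $R'$ (legitimate by Theorem~\ref{4.2t}), so that the summand $\exp\bigl(-\alpha n+\frac{1}{k^n}\sum_{|w|=n}S_w\Phi(x)\bigr)$ depends only on $(x,n)$ and not on the metric at all. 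Then the uniform continuity of $g$ and $g^{-1}$ on compact $X$ gives, for each $\varepsilon>0$, a $\delta>0$ with $\tilde B_n(x,\delta)\subset B_n(x,\varepsilon)$ and $B_n(x,\delta)\subset \tilde B_n(x,\varepsilon)$ uniformly in $n$ and $x$; consequently every $\tilde d$-cover at scale $\delta$ becomes a $d$-cover at scale $\varepsilon$ with the \emph{same} centers, the \emph{same} $n$'s, and the \emph{same} sum, and vice versa. This yields $M'(Z,G_1,\Phi,\alpha,\varepsilon,N)\le \tilde M'(Z,G_1,\Phi,\alpha,\delta,N)$ and the symmetric inequality, and passing $\varepsilon,\delta\to 0$ identifies the two pressures with no modulus-of-continuity correction needed. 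With that adjustment the proof of (4) is complete and no gap remains.
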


We also point out the \emph{continuity property of topological pressure and lower and upper capacity topological pressures.}
\begin{theorem}\label{3.3t}
If $\Phi=\{\varphi_{0},\cdots, \varphi_{k-1}\}$ and $\Psi=\{\psi_{0},\cdots, \psi_{k-1}\}$,  we have
\[
|P_{Z}(G_{1},\Phi)-P_{Z}(G_{1},\Psi)| \leq \max_{0 \leq i \leq k-1} \|\varphi_{i}-\psi_{i}\|,
\]
\[
|\underline{CP}_{Z}(G_{1},\Phi)-\underline{CP}_{Z}(G_{1},\Psi)| \leq \max_{0 \leq i \leq k-1} \|\varphi_{i}-\psi_{i}\|,
\]
\[
|\overline{CP}_{Z}(G_{1},\Phi)-\overline{CP}_{Z}(G_{1},\Psi)| \leq \max_{0 \leq i \leq k-1} \|\varphi_{i}-\psi_{i}\|,
\]
where $\|\cdot\|$ denotes the supremum norm in the space of continuous functions on X.
\end{theorem}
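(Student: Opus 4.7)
The plan is to exploit that the potentials enter the definitions of $M, R$ only through the averaged Birkhoff-type sums $\frac{1}{k^{n}}\sum_{|w|=n}S_w\Phi(y)$, and that perturbing each $\varphi_i$ by at most $c:=\max_{0\le i\le k-1}\|\varphi_i-\psi_i\|$ perturbs these sums by at most $nc$. This will translate a uniform perturbation of the potentials into a shift of the $\alpha$-parameter, which directly controls the critical values.

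First I would fix $\Phi,\Psi$ and set $c=\max_i\|\varphi_i-\psi_i\|$. For any $w=i_1\cdots i_n\in F_k^+$ and any $x\in X$, by the telescoping definition of $S_w\Phi(x)$,
\[
|S_w\Phi(x)-S_w\Psi(x)|\le\sum_{j=1}^{n}\|\varphi_{i_j}-\psi_{i_j}\|\le nc,
\]
hence for every $y\in X$,
\[
\Bigl|\tfrac{1}{k^{n}}\sum_{|w|=n}S_w\Phi(y)-\tfrac{1}{k^{n}}\sum_{|w|=n}S_w\Psi(y)\Bigr|\le nc.
\]
Taking suprema over $y\in B_n(x,\delta)$ preserves this bound.

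Next I would plug this into the sums defining $M$ and $R$. For any admissible cover $\mathcal{G}$,
\[
\exp\!\Bigl(-\alpha n+\sup_{y\in B_n(x,\delta)}\tfrac{1}{k^{n}}\sum_{|w|=n}S_w\Phi(y)\Bigr)
\le\exp\!\Bigl(-(\alpha-c)n+\sup_{y\in B_n(x,\delta)}\tfrac{1}{k^{n}}\sum_{|w|=n}S_w\Psi(y)\Bigr),
\]
so that
\[
M(Z,G_1,\Phi,\alpha,\delta,N)\le M(Z,G_1,\Psi,\alpha-c,\delta,N),
\]
and the analogous inequality holds for $R(Z,G_1,\Phi,\alpha,\delta,N)$ and $R(Z,G_1,\Psi,\alpha-c,\delta,N)$. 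Passing to the limits $N\to\infty$ (respectively $\liminf$ and $\limsup$) gives
\[
m(Z,G_1,\Phi,\alpha,\delta)\le m(Z,G_1,\Psi,\alpha-c,\delta),
\]
and the same for $\underline{r},\overline{r}$.

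Now I would read off the critical-value inequalities. If $\alpha-c>P_Z(G_1,\Psi,\delta)$, the right side above vanishes, hence so does the left, so $P_Z(G_1,\Phi,\delta)\le\alpha$; taking the infimum,
\[
P_Z(G_1,\Phi,\delta)\le P_Z(G_1,\Psi,\delta)+c.
\]
Swapping the roles of $\Phi$ and $\Psi$ (the estimate is symmetric in $c$) gives the matching lower bound, so
\[
|P_Z(G_1,\Phi,\delta)-P_Z(G_1,\Psi,\delta)|\le c,
\]
uniformly in $\delta>0$. Letting $\delta\to 0$ and invoking the existence of the limits from Definition~\ref{4.1t} yields the first inequality of the theorem. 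The same argument verbatim, with $m$ replaced by $\underline r$ and $\overline r$, gives the bounds for $\underline{CP}_Z$ and $\overline{CP}_Z$. I do not expect any genuine obstacle here; the only subtlety is keeping the bookkeeping straight so that the $+nc$ gets absorbed cleanly into a shift of $\alpha$ by $c$ (not by $c/k^n$ or similar), which the identity $\frac{1}{k^n}\sum_{|w|=n}nc=nc$ handles.
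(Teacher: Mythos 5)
Your proof is correct and takes essentially the same approach as the paper: in both cases the key observation is that replacing $\Phi$ by $\Psi$ changes the averaged Birkhoff sum $\frac{1}{k^n}\sum_{|w|=n}S_w\Phi$ by at most $nc$, which absorbs into a shift of the parameter $\alpha$ by $c$, and the critical-value inequalities then follow. The only cosmetic difference is that the paper runs the argument through $M'$, $m'$, $P'_Z$ (the center-of-ball version from Theorem~\ref{4.2t}) and then invokes the equivalence, whereas you work directly with $M$, $R$ as defined in Section~3.1; your route is if anything a touch more self-contained since it does not need Theorem~\ref{4.2t}.
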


\begin{proof}
Note that for any $w\in F_{k}^+, ~x\in X$,
 \[
 S_{w}\Phi(x)-S_{w}\Psi(x) \leq |w| \max_{0 \leq i \leq k-1} \|\varphi_{i}-\psi_{i}\|,
 \]
 and
 \[
\frac{1}{k^{n}} \sum_{|w|=n}S_{w}\Phi(x)-\frac{1}{k^{n}} \sum_{|w|=n}S_{w}\Psi(x) \leq n \max_{0 \leq i \leq k-1} \|\varphi_{i}-\psi_{i}\|.
 \]
Therefore,
\begin{align*}
 M'(Z,G_{1},\Phi,\alpha,\delta,N)
 \leq  M'(Z,G_{1},\Psi,\alpha-\max_{0 \leq i \leq k-1}\|\varphi_{i}-\psi_{i}\|,\delta,N).
 \end{align*}
Thus,
 \begin{align*}
M'(Z,G_{1},\Phi,\alpha,\delta,N) \leq   M'(Z,G_{1},\Psi,\alpha-\max_{0 \leq i \leq k-1}\|\varphi_{i}-\psi_{i}\|,\delta,N).
 \end{align*}
Taking limit $N \rightarrow \infty$ yields
\begin{align*}
 m'(Z,G_{1},\Phi,\alpha,\delta) \leq  m'(Z,G_{1},\Psi,\alpha-\max_{0 \leq i \leq k-1}\|\varphi_{i}-\psi_{i}\|,\delta).
\end{align*}
Therefore,
\[
P'_{Z}(G_{1},\Phi,\delta) \leq P'_{Z}(G_{1},\Psi,\delta)+\max_{0 \leq i \leq k-1}\|\varphi_{i}-\psi_{i}\|.
\]
Similarly, we can also get
\[
P'_{Z}(G_{1},\Phi,\delta) \geq P'_{Z}(G_{1},\Psi,\delta)-\max_{0 \leq i \leq k-1}\|\varphi_{i}-\psi_{i}\|.
\]
Let $\delta\rightarrow 0$, and we obtain
\[
P_{Z}(G_{1},\Psi)-\max_{0 \leq i \leq k-1}\|\varphi_{i}-\psi_{i}\| \leq P_{Z}(G_{1},\Phi) \leq P_{Z}(G_{1},\Psi)+\max_{0 \leq i \leq k-1}\|\varphi_{i}-\psi_{i}\|,
\]
which establishes the first inequality.
\end{proof}

\section{\emph{Bowen's equation}}

In this section, using the topological pressure defined in Section 3.1, we give a qualitative characterization of the Hausdoff dimension of the set $Z$, whose points have the positive lower Lyapunov exponents and satisfy a tempered contraction condition.

\begin{definition}\label{7.1d}
Let $(X,d)$ be a compact metric space and $f: X \longrightarrow X$ a continuous map. A continuous map $f: X \longrightarrow X$ is called \emph{conformal} with factor $a(x)$ if for every $x \in X$, we have
\[
a(x)=\lim_{y\rightarrow x}\frac{d(f(x),f(y))}{d(x,y)},
\]
where $a:X\longrightarrow [0,\infty)$ is continuous.
\end{definition}

Let $(X,d)$ be a compact metric space and $G$ be a semigroup of continuous transformations of $X$ generated by $G_{1}$,where $G_{1}=\{id_{X},f_{0},\cdots,f_{k-1}\}$ and $f_0,\ldots,f_{k-1}$  are conformal with factor $a_{i}(x),~i=0,\cdots,k-1$. Set $\Phi=\{\log a_{0},\cdots, \log a_{k-1}\}$.

For $w=i_{1}i_{2}\cdots i_{n} \in F_{k}^{+}$, denote
\[
S_{w}\Phi(x):=\log a_{i_{1}}(x)+\log a_{i_{2}}(f_{i_{1}}(x))+\cdots+\log a_{i_{n}}(f_{i_{n-1}i_{n-2}\cdots i_{1}}(x))
\]
and
\begin{align*}
\lambda_{w}(x)&=\frac{1}{|w|}S_{w}\Phi(x),
\end{align*}
where
\[
f_{i_{n-1}i_{n-2}\cdots i_{1}}:=f_{i_{n-1}}\circ f_{i_{n-2}} \circ \cdots \circ f_{i_{1}}.
\]
Let

\begin{align*}
\lambda_{n}(x):&=\frac{1}{n}\left(\frac{1}{k^{n}} \sum_{|w|=n}S_{w}\Phi(x)\right)\\
&=\frac{1}{n}\Bigg(\frac{\sum_{i_{1}\in F_{k}^{+}(1)}\varphi_{i_{1}}(x)}{k}+\frac{\sum_{i_{1}i_{2}\in F_{k}^{+}(2)}\varphi_{i_{2}}(f_{i_{1}}(x))}{k^{2}}+\cdots+\\
&\frac{\sum_{i_{1}i_{2}\cdots i_{n-1}\in F_{k}^{+}(n-1)}\varphi_{i_{n-1}}(f_{i_{n-2}\cdots i_{1}}(x))}{k^{n-1}}+\frac{\sum_{i_{1}i_{2}\cdots i_{n}\in F_{k}^{+}(n)}\varphi_{i_{n}}(f_{i_{n-1}\cdots i_{1}}(x))}{k^{n}}\Bigg).
\end{align*}
 It is easy to obtain
\[
\lambda_{n}(x)=\frac{1}{k^{n}} \sum_{|w|=n} \lambda_{w}(x).
\]
Set
\[
\underline{\lambda}(x)=\liminf_{n\rightarrow \infty} \lambda_{n}(x),
\]
and
\[
\overline{\lambda}(x)=\limsup_{n\rightarrow \infty} \lambda_{n}(x),
\]
then we call $\underline{\lambda}(x)$ and $\overline{\lambda}(x)$ the \emph{lower and upper Lyapunov exponents} of the semigroup $G$ at $x$. If the two are equal, their common value is called the \emph{Lyapunov exponent} at $x$:
\[
\lambda(x)=\lim_{n\rightarrow \infty} \lambda_{n}(x).
\]

Given $E \subset \mathbb{R}$, let $\mathcal{A}(E)$ be the set of points along whose orbits all the asymptotic exponential expansion rates of the $G$ lie in $E$:
\[
\mathcal{A}(E)=\{x\in X:[\underline{\lambda}(x),\overline{\lambda}(x)]\subseteq E\}.
\]
In particular, $\mathcal{A}((0,\infty))$ is the set of all points for which $\underline{\lambda}(x)>0$ and $\mathcal{A}(\alpha)=\mathcal{A}(\{\alpha\})$.

In \cite{Climenhaga}, Climenhaga introduced the so-called tempered contraction condition. On the base of \cite{Climenhaga}, we propose the  following so-called \emph{tempered contraction condition}:
\begin{equation}\label{7.2}
\inf_{\substack{n\in \mathbb{N}\\
 w'\in F_{k}^{+}(m) \\
 0< m \leq n} }\bigg\{ \frac{1}{k^{n}}\sum_{|w|=n}S_{w}\Phi(x)-S_{w'}\Phi(x)+n\varepsilon\bigg\}>-\infty,~\text{for any}~\varepsilon>0.
\end{equation}
Denote $\mathcal{B}$ as the set of all points in $X$ which satisfy (\ref{7.2}) .

\begin{example}
Suppose $X:=[0,1]$ and $f_{0}=2x~( mod ~1), f_{1}=2x+1/2~( mod ~1)$, then $G_{1}=\{id_{X}, f_{0}, f_{1}\}, \Phi=\{\log a_{0}, \log a_{1}\}=\{\log 2, \log 2\}$. Thus, $\mathcal{B}=X$.
\end{example}

\begin{theorem}\label{5.3t}
Let $(X,d)$ be a compact metric space and $G$ be a semigroup of continuous transformations of $X$ generated by $G_{1}$,where $G_{1}=\{id_{X},f_{0},\cdots,f_{k-1}\}$ and $f_0,\ldots,f_{k-1}$  are conformal with factor $a_{i}(x),~i=0,\cdots,k-1$.
Suppose that $f_{i}$ has no critical points and singularities, that is, $0< a_{i}(x)< \infty$ for all $x \in X$ and $i \in \{0,\cdots,k-1\}$.
Consider $Z \subset \mathcal{A}((0,\infty))\bigcap \mathcal{B}$ and $\Phi=\{\log a_{0},\cdots, \log a_{k-1}\}$.
Then the Hausdorff dimension of $Z$ is given by
\begin{align*}
dim_{H} Z=t^{*}&=\sup\{ t\geq 0: P_{Z}(G_{1},-t \Phi )>0\}\\
&=\inf \{ t\geq 0: P_{Z}(G_{1},-t \Phi ) \leq 0\}.
\end{align*}
Furthermore, if $Z \subset \mathcal{A}((\alpha,\infty))\bigcap \mathcal{B}$ for some $\alpha >0$, then $t^{*}$ is the unique root of Bowen's equation
\[
P_{Z}(G_{1},-t \Phi )=0.
\]
Finally, if $Z \subset \mathcal{A}(\alpha)$ for some $\alpha >0$, then $P_{Z}(G_{1},-t \Phi )=h_{Z}(G_{1})-t\alpha$, and hence
\[
dim_{H} Z=\frac{1}{\alpha} h_{Z}(G_{1}).
\]
Here $h_{Z}(G_{1})$ is the topological entropy on $Z$ defined by Ma and Wu in \cite{Ma} and $ P_{Z}(G_{1},-t \Phi )$ denotes the topological pressure on $Z$ (see Section 3.1).
\end{theorem}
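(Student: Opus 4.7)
The plan is to adapt Climenhaga's strategy for single maps \cite{Climenhaga} to the free semigroup setting. The topological pressure in our setting is built from the \emph{average} $\tfrac{1}{k^n}\sum_{|w|=n} S_w\Phi(x)$ over all words of length $n$, whereas the geometry of the Bowen ball $B_n(x,\delta) = \bigcap_{|w|\le n} f_w^{-1}(B(f_w(x),\delta))$ is controlled by the \emph{maximum} $\max_{|w|\le n}S_w\Phi(x)$. The tempered contraction condition \eqref{7.2} is precisely the bridge between these two quantities: it bounds the maximum above by the average plus $n\varepsilon+C$, while the maximum trivially dominates the average from below.

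The first step is to prove a volume-type lemma: for each $x\in\mathcal{A}((0,\infty))\cap\mathcal{B}$, each $\varepsilon>0$ and all sufficiently small $\delta$, the diameter of $B_n(x,\delta)$ satisfies
\[
\operatorname{diam}B_n(x,\delta) \asymp \delta\exp\Bigl(-\tfrac{1}{k^n}\textstyle\sum_{|w|=n}S_w\Phi(x)\Bigr)e^{\pm n\varepsilon}
\]
for $n$ large enough. Conformality of each $f_i$ yields $f_w^{-1}(B(f_w(x),\delta)) \asymp B(x,\delta/\exp(S_w\Phi(x)))$, so the diameter of $B_n(x,\delta)$ is governed by the word maximising $S_w\Phi$ over $|w|\le n$; combined with \eqref{7.2} this produces the displayed estimate. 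Positive lower Lyapunov exponent ensures $\operatorname{diam}B_n(x,\delta)\to 0$, so Bowen balls serve as admissible covers for Hausdorff dimension. To handle the non-uniformity in $x$ of the constants and thresholds in this lemma, I would decompose $Z=\bigcup_j Z_j$ into countably many level sets on which the tempered contraction constant $C$, the rate $\varepsilon$, and the Lyapunov threshold $N$ are all uniform, and then appeal to countable stability of both $\dim_H$ and $P_Z$ (Proposition \ref{3.1p}(3)) to reduce to this uniform setting.

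With the volume lemma in hand, the upper bound $\dim_H Z\le t^*$ is obtained as follows. For $t>t^*$ we have $P_Z(G_1,-t\Phi)<0$, so for suitable $\alpha>0$ the quantity $m(Z,G_1,-t\Phi,\alpha,\delta)$ vanishes, supplying arbitrarily efficient Bowen ball covers $\{B_{n_i}(x_i,\delta)\}$ of $Z$. Replacing each by a standard metric ball of comparable diameter via the volume lemma produces a cover realising $\mathcal{H}^b(Z,t)=0$ once $\varepsilon$ is chosen small enough that $e^{tn_i\varepsilon}$ is absorbed by $e^{-\alpha n_i}$, giving $\dim_H Z=\dim_H^b Z\le t$. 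For the reverse direction $\dim_H Z\ge t^*$, fix $t<t^*$ and take an arbitrary open ball cover $\{B(y_i,r_i)\}$ of $Z$ with $y_i\in Z$; for each $i$, use the volume lemma to choose $n_i$ with $\operatorname{diam} B_{n_i}(y_i,\delta)\approx r_i$, converting the cover into a Bowen ball cover whose pressure sum is comparable, up to sub-exponential error, to $\sum_i r_i^t$. Since $P_Z(G_1,-t\Phi)>0$, this pressure sum cannot be driven to zero, so $\mathcal{H}^b(Z,t)=\infty$ and hence $\dim_H Z\ge t$.

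Uniqueness of the root when $Z\subset\mathcal{A}((\alpha,\infty))\cap\mathcal{B}$ follows from strict monotonicity: replacing $t$ by $t+s$ multiplies each term of the pressure sum by at least $e^{-s\alpha n_i}$, giving $P_Z(G_1,-(t+s)\Phi)\le P_Z(G_1,-t\Phi)-s\alpha$. For $Z\subset\mathcal{A}(\alpha)$ the equality $\lambda(x)=\alpha$ yields $\tfrac{1}{k^n}\sum_{|w|=n}S_w\Phi(x) = n\alpha+o(n)$, which substituted into the definitions of $M$ and $m$ shows $P_Z(G_1,-t\Phi)=h_Z(G_1)-t\alpha$, whence $\dim_H Z=h_Z(G_1)/\alpha$. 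The main obstacle throughout is the volume lemma: the interplay between the maximum and the average over words of length $n$, together with the pointwise nature of the tempered contraction constants, is what makes careful use of \eqref{7.2} and countable stability indispensable.
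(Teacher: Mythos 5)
Your proposal follows essentially the same route as the paper: the same volume lemma $B(x,\eta\delta e^{-n(\lambda_n(x)+\varepsilon)})\subset B_n(x,\delta)\subset B(x,\delta e^{-n(\lambda_n(x)-\varepsilon)})$ serving as the bridge, the same countable decomposition into level sets (on Lyapunov exponent for the upper bound, on the tempered contraction constant $\eta$ for the lower bound) to remove non-uniformity in $x$, the same two-sided conversion between Bowen-ball covers and metric-ball covers, and the same use of Proposition \ref{3.1p}(3) and the strict monotonicity from Proposition \ref{7.1p}. The only stylistic divergence is in the outer inclusion of the volume lemma: you argue via the word maximizing $S_w\Phi$ and then invoke \eqref{7.2} to relate the max to the average, whereas the paper dispenses with \eqref{7.2} there entirely by a geometric-mean trick (multiplying the inequalities $d(x,y)<\delta e^{-n(\lambda_w(x)-\varepsilon)}$ over all $|w|=n$), reserving the tempered-contraction hypothesis solely for the inner inclusion; both variants are valid and yield the same estimate.
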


Before proving the Theorem 4.1, we first give the relevant properties and auxiliary lemmas.

\begin{proposition}\label{7.1p}
Let $f_{i}: X \longrightarrow X$ be as in Theorem 4.2.
Fix $0 < \alpha \leq \beta < \infty$ and $Z \subset \mathcal{A}([\alpha,\beta])$, then

(1) for any $t \in \mathbb{R}$ and $h > 0$, we have
\begin{equation}\label{7.1}
 P_{Z}(G_{1},-t\Phi)-\beta h \leq P_{Z}(G_{1},-(t+h)\Phi)\leq P_{Z}(G_{1},-t\Phi)-\alpha h.
\end{equation}

(2) The equation $P_{Z}(G_{1},-t\Phi)=0$ has unique root $t^{*}$ and
\[
\frac{h_{Z}(G_{1})}{\beta} \leq t^{*} \leq \frac{h_{Z}(G_{1})}{\alpha}.
\]

(3) If $\alpha=\beta$, then $ t^{*} = \frac{h_{Z}(G_{1})}{\alpha}$.\\
Here $h_{Z}(G_{1})$ is the topological entropy defined by Ma and Wu \cite{Ma}, $\Phi=\{\log a_{0},\cdots, \log a_{k-1}\}$ and $t \Phi=\{ t\cdot \log a_{0},\cdots, t\cdot \log a_{k-1}\}$.

\end{proposition}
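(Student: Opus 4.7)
The plan is to exploit the algebraic identity
\[
S_w(-(t+h)\Phi)(x)=S_w(-t\Phi)(x)-h\,S_w\Phi(x),
\]
which after averaging over $|w|=n$ becomes
\[
\frac{1}{k^{n}}\sum_{|w|=n}S_w(-(t+h)\Phi)(x)=\frac{1}{k^{n}}\sum_{|w|=n}S_w(-t\Phi)(x)-hn\lambda_n(x).
\]
Since $Z\subset\mathcal{A}([\alpha,\beta])$, for each $x\in Z$ we have $\alpha\le\liminf_n\lambda_n(x)$ and $\limsup_n\lambda_n(x)\le\beta$, so asymptotically $-hn\lambda_n(x)$ is sandwiched between $-hn(\beta+\varepsilon)$ and $-hn(\alpha-\varepsilon)$. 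Substituting these bounds into the exponent that appears in the center-anchored functional $M'$ from Section 3.2 will, after reading off critical values, deliver (1) up to an $h\varepsilon$ error that vanishes at the end.

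The only real subtlety is that control of $\lambda_n(x)$ is pointwise, not uniform in $x$. I would handle this by fixing $\varepsilon>0$ and writing $Z=\bigcup_{K\ge 1}Z_K^\varepsilon$, where
\[
Z_K^\varepsilon=\{x\in Z:\alpha-\varepsilon\le\lambda_n(x)\le\beta+\varepsilon\text{ for all }n\ge K\}.
\]
On $Z_K^\varepsilon$ the bound is uniform, so restricting the covers in the $M'$-definition to $N\ge K$ yields, for every $s\in\mathbb{R}$,
\[
M'(Z_K^\varepsilon,G_1,-(t+h)\Phi,s,\delta,N)\le M'(Z_K^\varepsilon,G_1,-t\Phi,s+h\alpha-h\varepsilon,\delta,N),
\]
together with the symmetric lower bound involving $s+h\beta+h\varepsilon$. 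Passing to $m'$, comparing critical values, and then sending $\delta\to 0$ via Theorem \ref{4.2t} produces
\[
P_{Z_K^\varepsilon}(G_1,-t\Phi)-h(\beta+\varepsilon)\le P_{Z_K^\varepsilon}(G_1,-(t+h)\Phi)\le P_{Z_K^\varepsilon}(G_1,-t\Phi)-h(\alpha-\varepsilon).
\]
Taking $\sup_K$ (legitimate by Proposition \ref{3.1p}(3)) and then $\varepsilon\to 0$ establishes (1).

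Item (2) falls out once (1) is in hand. Since $\alpha>0$, the right-hand inequality forces $t\mapsto P_Z(G_1,-t\Phi)$ to be strictly decreasing, while the two-sided bound in (1) also makes it Lipschitz continuous in $t$; the intermediate value theorem then gives a unique root $t^*$. Applying (1) with $t=0$ and using $P_Z(G_1,0)=h_Z(G_1)$ from the remark following Definition \ref{4.1t} gives
\[
h_Z(G_1)-\beta t\le P_Z(G_1,-t\Phi)\le h_Z(G_1)-\alpha t,
\]
so substituting $t=t^*$ and using $P_Z(G_1,-t^*\Phi)=0$ yields the desired enclosure $h_Z(G_1)/\beta\le t^*\le h_Z(G_1)/\alpha$. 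Item (3) is then immediate: when $\alpha=\beta$ the two inequalities in the display above coincide, so $P_Z(G_1,-t\Phi)=h_Z(G_1)-\alpha t$ exactly, forcing $t^*=h_Z(G_1)/\alpha$.

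The main obstacle is precisely the pointwise-rather-than-uniform nature of the bounds $\alpha\le\underline\lambda(x)$ and $\overline\lambda(x)\le\beta$, which the exhaustion $Z=\bigcup_K Z_K^\varepsilon$ combined with the countable-union property (Proposition \ref{3.1p}(3)) resolves cleanly. A secondary bookkeeping point is that the algebra is most natural inside the center-anchored variant $M'$, so Theorem \ref{4.2t} must be invoked at the very end to translate the $\delta\to 0$ limit back into a statement about $P_Z(G_1,\cdot)$.
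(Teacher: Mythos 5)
Your proposal follows essentially the same route as the paper's proof: the exhaustion $Z=\bigcup_m Z_m$ with $Z_m=\{x\in Z:\lambda_n(x)\in(\alpha-\varepsilon,\beta+\varepsilon)\ \forall n\ge m\}$ is precisely the paper's decomposition, the algebraic manipulation inside $M'$ yielding the two-sided inequality on critical values is the same, and the passage through Proposition \ref{3.1p}(3), the limit $\varepsilon\to 0$, and the intermediate value theorem matches the paper step for step. The only cosmetic difference is that in part (2) you phrase the enclosure for $t^*$ via a single two-sided bound $h_Z(G_1)-\beta t\le P_Z(G_1,-t\Phi)\le h_Z(G_1)-\alpha t$, while the paper substitutes $t=0$ with $h=h_Z(G_1)/\beta$ and $h=h_Z(G_1)/\alpha$ separately; these are equivalent.
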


\begin{proof}
(1) For arbitrary $\varepsilon >0$ and $m \geq 1$, consider
\[
Z_{m}=\Big\{x \in Z: \lambda_{n}(x) \in (\alpha-\varepsilon,\beta+\varepsilon), ~\text{for any}~n\geq m\Big\},
\]
and it is easy to verify  $Z= \cup_{m=1}^{\infty} Z_{m}$. Now fix $t \in \mathbb{R},~h > 0, ~m>0$ and $N \geq m$, then for any $\delta >0$, $p \in \mathbb{R}$, we can get
\begin{align*}
M'(Z_{m},&G_{1},-(t+h)\Phi,p,\delta,N)\\
&=\inf\limits_{\mathcal{G} }\left\{\sum\limits_{B_{n}(x,\delta)\in\mathcal{G} }\exp \bigg( -p \cdot n-(t+h) \frac{1}{k^{n}} \sum_{|w|=n}S_{w}\Phi(x) \bigg)\right\}  \\
%&= \inf\limits_{\mathcal{G}}\left\{\sum\limits_{B_{n}(x,\delta)\in\mathcal{G} }\exp \bigg(-s \cdot n-t \frac{1}{(k)^{n}} \Sigma_{w \in F^{+}_{k}(n)}S_{w}\Phi(x)- h  \frac{1}{(k)^{n}} \Sigma_{w \in F^{+}_{k}(n)}S_{w}\Phi(x) \bigg)\right\} \\
&\leq \inf\limits_{\mathcal{G} }\left\{\sum\limits_{B_{n}(x,\delta)\in\mathcal{G}}\exp \bigg(-p\cdot n-t  \frac{1}{k^{n}} \sum_{|w|=n}S_{w}\Phi(x) -h\cdot n(\alpha-\varepsilon)\bigg)\right\} \\
&=\inf\limits_{\mathcal{G} }\left\{\sum\limits_{B_{n}(x,\delta)\in\mathcal{G}}\exp \bigg(-(p+h(\alpha-\varepsilon)\cdot n-t  \frac{1}{k^{n}} \sum_{|w|=n}S_{w}\Phi(x) \bigg)\right\} \\
&=M'(Z_{m},G_{1},-t\Phi,p+h(\alpha-\varepsilon),\delta,N),
\end{align*}
where $\mathcal{G}$ covers $Z_{m}$ and for any $B_{n}(x,\delta) \in \mathcal{G}$, ~$ n \geq N$.
Then
\[
m'(Z_{m},G_{1},-(t+h)\Phi,p,\delta) \leq m'(Z_{m},G_{1},-t\Phi,p+h(\alpha-\varepsilon),\delta).
\]
It follows that
\[
P'_{Z_{m}}(G_{1},-(t+h)\Phi,\delta) \leq P'_{Z_{m}}(G_{1},-t\Phi,\delta)-h(\alpha-\varepsilon).
\]
Letting $\delta \rightarrow 0$, we obtain
\[
P_{Z_{m}}(G_{1},-(t+h)\Phi) \leq P_{Z_{m}}(G_{1},-t\Phi)-h(\alpha-\varepsilon).
\]
Taking the supremum of $m\geq 1$ and by the Proposition \ref{3.1p}, we can get
\[
P_{Z}(G_{1},-(t+h)\Phi) \leq P_{Z}(G_{1},-t\Phi)-h(\alpha-\varepsilon).
\]
Since $\varepsilon > 0$ is arbitrary, the right of inequality (\ref{7.1}) is established.

Using the similar calculation in this manner, we also get
\begin{align*}
M'(Z_{m},G_{1},-(t+h)\Phi,p,\delta,N)
 \geq M'(Z_{m},G_{1},-t\Phi,p+h(\beta+\varepsilon),\delta,N).
\end{align*}
It follows that
\begin{align*}
m'(Z_{m},G_{1},-(t+h)\Phi,p,\delta)
 \geq m'(Z_{m},G_{1},-t\Phi,p+h(\beta+\varepsilon),\delta).
\end{align*}
Hence,
\[
P'_{Z_{m}}(G_{1},-(t+h)\Phi,\delta) \geq P'_{Z_{m}}(G_{1},-t\Phi,\delta)-h(\beta+\varepsilon).
\]
Let $\delta \rightarrow 0$ and we can get
\[
P_{Z_{m}}(G_{1},-(t+h)\Phi) \geq P_{Z_{m}}(G_{1},-t\Phi)-h(\beta+\varepsilon).
\]
Take the supremum over all $m\geq 1$, and by the Proposition \ref{3.1p} we can get
\[
P_{Z}(G_{1},-(t+h)\Phi) \geq P_{Z}(G_{1},-t\Phi)-h(\beta+\varepsilon).
\]
Since $\varepsilon > 0$ is arbitrary, the left of inequality (\ref{7.1}) can be given.
This complete the proof of the inequality (\ref{7.1}).

(2) Note that the map $t \mapsto P_{Z}(G_{1},-t\Phi)$ is continuous and strictly decreasing by (1). First let $t=0$ and $h=\frac{h_{Z}(G_{1})}{\beta}$ on the left of inequality (\ref{7.1}), and then we have
\[
P_{Z}(G_{1},-\frac{h_{Z}(G_{1})}{\beta}\Phi)\geq P_{Z}(G_{1},0)-h_{Z}(G_{1})=0.
\]
Second let $t=0$ and $h=\frac{h_{Z}(G_{1})}{\alpha}$ on the right of inequality (\ref{7.1}), we can get
\[
P_{Z}(G_{1},-\frac{h_{Z}(G_{1})}{\alpha}\Phi) \leq P_{Z}(G_{1},0)-h_{Z}(G_{1})=0.
\]
Consequently, we can obtain the desired result by Intermediate Value Theorem.

(3) It follows from (2) immediately.
\end{proof}

\begin{lemma}\label{7.1l}
Let $f_{i}: X \longrightarrow X$ be as in the Theorem 4.2.
Then given any $x \in \mathcal{B}$ and $\varepsilon > 0$, there exist $\delta_{0}=\delta_{0}(\varepsilon)>0$,  $\eta=\eta(x,\varepsilon)>0$ such that for each $n \in \mathbb{N}$ and $0 < \delta < \delta_{0}$,
\begin{equation}\label{7.3}
B(x,\eta\delta e^{-n(\lambda_{n}(x)+\varepsilon)})\subset B_{n}(x,\delta) \subset B(x,\delta e^{-n(\lambda_{n}(x)-\varepsilon)}).
\end{equation}
\end{lemma}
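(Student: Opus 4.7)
The strategy is to reduce (\ref{7.3}) to a uniform one-step distortion estimate and then iterate along words in $F_k^+$. From compactness of $X$, continuity of each factor $a_i$ (with $0 < a_i < \infty$), and the pointwise conformality, for every $\varepsilon_1 > 0$ one produces $\delta_0 = \delta_0(\varepsilon_1) > 0$ such that
\[
\bigl| \log d(f_i(u), f_i(v)) - \log d(u, v) - \log a_i(u) \bigr| < \varepsilon_1
\]
whenever $0 < d(u, v) < \delta_0$, uniformly in $i = 0, \ldots, k-1$. Telescoping along any word $w = i_1 i_2 \cdots i_m$, and provided each intermediate orbit pair $\bigl(f_{i_{j-1}\cdots i_1}(x), f_{i_{j-1}\cdots i_1}(y)\bigr)$ stays within $\delta_0$, this gives the two-sided distortion estimate
\[
e^{S_w\Phi(x) - m\varepsilon_1}\, d(x, y) \le d(f_{\overline w}(x), f_{\overline w}(y)) \le e^{S_w\Phi(x) + m\varepsilon_1}\, d(x, y).
\]

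\textbf{Right-hand inclusion of (\ref{7.3}).} Given $y \in B_n(x, \delta)$ with $\delta < \delta_0$, every intermediate orbit pair is within $\delta < \delta_0$, so the distortion estimate applies for every word of length at most $n$. Applying its lower bound to each word $w$ with $|w| = n$ gives $d(x, y) \le \delta\, e^{-S_w\Phi(x) + n\varepsilon_1}$. Optimizing over $w$ and using
\[
\max_{|w|=n} S_w\Phi(x) \ge \frac{1}{k^n}\sum_{|w|=n} S_w\Phi(x) = n\lambda_n(x)
\]
yields $d(x, y) \le \delta\, e^{-n(\lambda_n(x) - \varepsilon_1)}$; choosing $\varepsilon_1 \le \varepsilon$ completes this direction.

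\textbf{Left-hand inclusion of (\ref{7.3}).} Choose $\varepsilon_0 = \varepsilon_1 = \varepsilon/2$ and apply the tempered contraction condition (\ref{7.2}) at $x$ with parameter $\varepsilon_0$ to obtain a constant $C = C(x, \varepsilon)$ such that
\[
S_{w'}\Phi(x) - n\lambda_n(x) \le C + n\varepsilon_0 \qquad \text{for all } n \ge 1 \text{ and } 1 \le |w'| \le n.
\]
Taking $|w'| = 1$ and writing $M := \max_i \|\varphi_i\|$, this forces $n(\lambda_n(x) + \varepsilon) \ge -C - M$ (independent of $n$), so setting $\eta = \eta(x, \varepsilon) := e^{-C-M}$ ensures $\eta\, e^{-n(\lambda_n(x) + \varepsilon)} \le 1$ for every $n$. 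For $d(x, y) < \eta \delta\, e^{-n(\lambda_n(x) + \varepsilon)}$, one proves by induction on $m = |w'| \le n$ that $d(f_{\overline{w'}}(x), f_{\overline{w'}}(y)) < \delta$: the base case $m = 0$ holds by the choice of $\eta$, and for the inductive step all prior orbit pairs lie within $\delta < \delta_0$, so the telescoped upper distortion estimate applies and, combined with the tempered contraction bound, gives
\[
d(f_{\overline{w'}}(x), f_{\overline{w'}}(y)) \le e^{S_{w'}\Phi(x) + m\varepsilon_1}\, d(x, y) < \eta \delta\, e^{C + n\varepsilon_0 - n\varepsilon + m\varepsilon_1} \le \eta\, e^{C}\, \delta \le \delta,
\]
using $\varepsilon_0 + \varepsilon_1 = \varepsilon$ and $m \le n$.

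\textbf{Main obstacle.} The delicate point is the inductive bootstrap for the left-hand inclusion: the telescoped distortion estimate is valid only along an orbit that never exits the $\delta_0$-regime, while one is trying to deduce precisely this confinement. The tempered contraction condition (\ref{7.2}) is the essential input, controlling the worst-case cumulative expansion $\max_{1 \le |w'| \le n} S_{w'}\Phi(x)$ against the mean $n\lambda_n(x)$ with a constant independent of $n$ and $w'$; this is exactly what permits a choice of $\eta$ that is uniform in $n$. The upgrade from pointwise conformality to the uniform one-step distortion bound above is standard, using only compactness of $X$ and continuity of the factors $a_i$.
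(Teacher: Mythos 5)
Your proof is correct and follows essentially the same approach as the paper: a uniform one-step distortion estimate from conformality and compactness (the paper gets this by extending $\zeta_i(x,y)=\log d(f_i(x),f_i(y))-\log d(x,y)$ continuously to the diagonal, citing its companion paper), telescoping along words, and the tempered contraction condition to supply an $n$-independent constant $\eta$ for the left-hand inclusion via induction over prefixes. The one cosmetic difference is in the right-hand inclusion: you pass from the $k^n$ pointwise bounds $d(x,y)\le\delta e^{-S_w\Phi(x)+n\varepsilon_1}$ to $d(x,y)\le\delta e^{-n(\lambda_n(x)-\varepsilon_1)}$ by using $\max_{|w|=n}S_w\Phi(x)\ge n\lambda_n(x)$, whereas the paper multiplies all $k^n$ bounds together and takes a geometric mean; both give the same conclusion.
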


\begin{proof}
Owing to the fact that $f_{i}$ is conformal with factor $a_{i}(x)>0$ for each $i \in \{0,1,2\cdots,k-1\}$, we have
\[
\lim_{y\rightarrow x}\frac{d(f_{i}(x),f_{i}(y))}{d(x,y)}=a_{i}(x).
\]
Since $a_{i}(x)>0$ everywhere, we can take logarithms of the above equation and get
\[
\lim_{y\rightarrow x}\bigg (\log d(f_{i}(x),f_{i}(y))-\log d(x,y)\bigg)=\log a_{i}(x).
\]
Then we can extend it to a continuous function $\zeta_{i}: X \times X\longrightarrow \mathbb{R}$
\begin{equation*}
\zeta_{i}(x,y)=\begin{cases}
\log d(f_{i}(x),f_{i}(y)-\log d(x,y)  \       \ x\neq y, \\
\log a_{i}(x) \       \ x=y.
\end{cases}
\end{equation*}
%For any $(x,y),(x',y') \in X \times X$, define the metric $d\times d$ on $X \times X$ as follows
%\[
%(d\times d)((x,y),(x',y')):=d(x,x')+d(y,y').
%\]
%Because $X \times X$ is compact, $\zeta_{i}$ is uniformly continuous, $i \in \{1,\cdots,k\}$. Hence for any $\varepsilon >0$, there exists $\delta_{0}=\delta_{0}(\varepsilon)>0$
%such that for every $0 < \delta < \delta_{0}$, and $(x,y),(x',y') \in X \times X$ {\color{red}with}
%\[
%(d\times d)((x,y),(x',y'))<\delta,
%\]
%we have $|\zeta_{i}(x,y)-\zeta_{i}(x',y')| <\varepsilon$. In particular, for $x,y \in X$ with $d(x,y)<\delta$, we have $(d\times d)((x,y),(x,x))<\delta$.
%Accordingly,
%\[
%|\zeta_{i}(x,y)-\zeta_{i}(x,x)|=|\log d(f_{i}(x),f_{i}(y))-\log d(x,y)-\log a_{i}(x)| < \varepsilon,
%\]
%namely,
%\[
%\log d(f_{i}(x),f_{i}(y))-\log a_{i}(x)-\varepsilon < \log d(x,y)< \log d(f_{i}(x),f_{i}(y))-\log a_{i}(x)+\varepsilon .
%\]
Similar to the proof of Lemma 6.1 in \cite{Xiao}, for any $\varepsilon >0$ we can also get
\begin{equation}\label{7.4}
d(f_{i}(x),f_{i}(y))e^{-(\log a_{i}(x)+\varepsilon)} <  d(x,y)<  d(f_{i}(x),f_{i}(y))e^{-(\log a_{i}(x)-\varepsilon)} ,
\end{equation}
whenever the middle quantity is less than $\delta$.

Now we prove the second half of (\ref{7.3}). Let $\Phi=\{\log a_{0},\cdots, \log a_{k-1}\}$ and for any $y \in B_{n}(x,\delta)$, we have $d(g(x),g(y)) < \delta$ for all $g \in G_{n}$.
Then for any $w=i_{1}i_{2}\cdots i_{n} \in F^{+}_{k}(n)$, repeated application of the second inequality in (\ref{7.4}) yields
\begin{align*}
 d(x,y)&< d(f_{i_{1}}(x),f_{i_{1}}(y)) e^{-(\log a_{i_{1}}(x)-\varepsilon)}\\
 &<d(f_{i_{2}i_{1}}(x),f_{i_{2}i_{1}}(y)) e^{-(\log a_{i_{2}}(f_{i_{1}}(x))-\varepsilon)} e^{-(\log a_{i_{1}}(x)-\varepsilon)}\\
 &<\cdots\\
 &<d(f_{\overline{w}}(x),f_{\overline{w}}(y)) e^{-(S_{w}\Phi(x)-|w|\varepsilon )}\\
 &< \delta e^{-|w|(\lambda_{w}(x)-\varepsilon )}.
\end{align*}
Since $w \in F^{+}_{k}(n)$ is arbitrary, we can obtain
\[
d(x,y)^{k^{n}} < \prod_{|w|=n} \delta e^{-n(\lambda_{w}(x)-\varepsilon )},
\]
and then we have
\[
d(x,y)< \left(\prod_{|w|=n} \delta e^{-n(\lambda_{w}(x)-\varepsilon )}\right)^{\frac{1}{k^n}} = \delta e^{-n(\lambda_{n}(x)-\varepsilon)}.
\]
Thus
\[
B_{n}(x,\delta) \subset B(x,\delta e^{-n(\lambda_{n}(x)-\varepsilon)}).
\]

Next we show the first inclusion in (\ref{7.3}). Note that for any fixed $w=i_{1}i_{2}\cdots i_{n} \in F^{+}_{k}(n)$, if $d(x,y)<\delta$, then by the first inequality in (\ref{7.4}) we get
\[
d(f_{i_1}(x),f_{i_1}(y))<d(x,y) e^{\log a_{i_{1}}(x)+\varepsilon}.
\]
Then if $d(x,y)<\delta e^{-(\log a_{i_{1}}(x)+\varepsilon)}$, we have $d(f_{i_1}(x),f_{i_1}(y))<\delta$ and so
\begin{align*}
d(f_{i_{2}i_{1}}(x),f_{i_{2}i_{1}}(y))&<d(f_{i_1}(x),f_{i_1}(y))e^{\log a_{i_{2}}(f_{i_{1}}x)+\varepsilon}\\
&<d(x,y)e^{\log a_{i_{2}}(f_{i_{1}}x)+\varepsilon} e^{\log a_{i_{1}}(x)+\varepsilon}.
\end{align*}
Using this method repeatedly, we can obtain that if
\[
d(x,y)<\delta e^{-|w'|(\lambda_{w'}(x)+\varepsilon)}
\]
for each $\overline{w'}\leq \overline{w}$, we have $d(f_{\overline{w'}}(x),f_{\overline{w'}}(y))<\delta$, namely, $y \in B_{w}(x, \delta)$.
Therefore
\begin{equation*}
B(x,\delta \min_{\overline{w'}\leq \overline{w}}e^{-|w'|(\lambda_{w'}(x)+\varepsilon)}) \subset B_{w}(x,\delta).
\end{equation*}
Because of $B_{n}(x,\delta)=\cap_{|w|=n} B_{w}(x,\delta)$, it is easy to get
\begin{equation}\label{7.5}
B(x,\delta \min_{\overline{w'}\leq \overline{w},|w|=n}e^{-|w'|(\lambda_{w'}(x)+\varepsilon)}) \subset B_{n}(x,\delta).
\end{equation}
Now we find what $\eta$ should be for any $\overline{w'}\leq \overline{w}$ and $|w|=n$, and we observe that
\begin{align*}
\frac{e^{-n(\lambda_{n}(x)+2\varepsilon)}}{e^{-|w'|(\lambda_{w'}(x)+\varepsilon)}}
&=\frac{e^{-\frac{1}{k^{n}}\Sigma_{|w|=n}S_{w}\Phi(x)-2n\varepsilon}}{e^{-S_{w'}\Phi(x)-|w'|\varepsilon}}\\
&=e^{-\big(\frac{1}{k^{n}}\Sigma_{|w|=n}S_{w}\Phi(x)-S_{w'}\Phi(x)+2n\varepsilon-|w'|\varepsilon\big)}\\
&\leq e^{-\big(\frac{1}{k^{n}}\Sigma_{|w|=n}S_{w}\Phi(x)-S_{w'}\Phi(x)+n\varepsilon\big)}.
\end{align*}
Because $x$ satisfies the tempered contraction condition, there exists $\eta=\eta(x,\varepsilon)>0$ such that for arbitrary $w \in F_{k}^{+}(n),~\overline{w'}\leq \overline{w}$,
\[
 \frac{1}{k^{n}}\sum_{|w|=n}S_{w}\Phi(x)-S_{w'}\Phi(x)+n\varepsilon > \log \eta ,
\]
 and hence
\[
e^{-\big(\frac{1}{k^{n}}\Sigma_{|w|=n}S_{w}\Phi(x)-S_{w'}\Phi(x)+n\varepsilon\big)}< \frac{1}{\eta}.
\]
Then for any $|w|=n,\ \overline{w'}\leq \overline{w}$, we have
\[
\eta e^{-n(\lambda_{n}(x)+2\varepsilon)}< e^{-|w'|(\lambda_{w'}(x)+\varepsilon)},
\]
and combining with (\ref{7.5})
\[
B(x,\delta \eta e^{-n(\lambda_{n}(x)+2\varepsilon)}) \subset B_{n}(x,\delta).
\]
Taking $\delta_{0}=\delta_{0}(\varepsilon/2)$ gives
\[
B(x,\eta\delta e^{-n(\lambda_{n}(x)+\varepsilon)}) \subset B_{n}(x,\delta) .
\]
\end{proof}

\begin{lemma}\label{7.2l}
Let $f_{i}: X\longrightarrow X$ satisfy the conditions of Theorem 4.2 and fix $Z \subset \mathcal{A}((\alpha,\infty))\cap \mathcal{B}$, where $0<\alpha<\infty$. Let $t^{*}$ be the unique real number with $P_{Z}(G_{1},-t^{*} \Phi )=0$, whose existence and uniqueness are guaranteed by Proposition \ref{7.1p}, where $\Phi=\{\log a_{0},\cdots,~\log a_{k-1}\}$. Then $dim_{H} Z = t^{*}$.
\end{lemma}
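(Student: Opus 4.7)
The plan is a two-sided bound identifying $t^{*}$ with $\dim_{H}Z$, with Lemma \ref{7.1l} as the engine: it sandwiches each Bowen ball $B_{n}(x,\delta)$ between two metric balls whose radii are controlled by $\lambda_{n}(x)$, and so will let us translate Bowen-ball covers (which compute $P_{Z}(G_{1},-t\Phi,\delta)$) into metric-ball covers (which compute $\mathcal{H}^{b}(Z,t)$) and back. Because the constant $\eta(x,\varepsilon)$ in the first inclusion depends on $x$, the first step is the decomposition
\begin{equation*}
Z=\bigcup_{m\geq 1}Z_{m},\qquad Z_{m}:=\bigl\{x\in Z:\lambda_{n}(x)\geq\alpha\text{ for all }n\geq m,\ \eta(x,\varepsilon)\geq 1/m\bigr\},
\end{equation*}
which covers $Z$ since $\underline{\lambda}(x)>\alpha$ and $\eta(x,\varepsilon)>0$ for every $x\in\mathcal{A}((\alpha,\infty))\cap\mathcal{B}$.

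For the upper bound $\dim_{H}Z\leq t^{*}$, fix $t>t^{*}$ so that Proposition \ref{7.1p} yields $-2\gamma:=P_{Z}(G_{1},-t\Phi)<0$. By Theorem \ref{4.2t} and Proposition \ref{3.1p}(2), for $\delta$ sufficiently small one has $P'_{Z_{m}}(G_{1},-t\Phi,\delta)<-\gamma$, and picking $\varepsilon<\gamma/t$ gives $-t\varepsilon>-\gamma>P'_{Z_{m}}$, whence $m'(Z_{m},G_{1},-t\Phi,-t\varepsilon,\delta)=0$. Thus for each $\sigma>0$ there exist $N$ and a cover $\{B_{n_{i}}(x_{i},\delta)\}$ of $Z_{m}$ with $x_{i}\in Z_{m}$, $n_{i}\geq N$, and $\sum_{i}\exp(tn_{i}\varepsilon-tn_{i}\lambda_{n_{i}}(x_{i}))<\sigma$. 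Enlarging via $B_{n_{i}}(x_{i},\delta)\subset B(x_{i},\delta e^{-n_{i}(\lambda_{n_{i}}(x_{i})-\varepsilon)})$ from Lemma \ref{7.1l} produces a metric-ball cover of $Z_{m}$ whose $t$-Hausdorff sum is at most $(2\delta)^{t}\sigma$, while $\lambda_{n_{i}}(x_{i})\geq\alpha$ forces every radius to be $\leq\delta e^{-N(\alpha-\varepsilon)}\to 0$. Hence $\mathcal{H}^{b}(Z_{m},t)=0$, countable subadditivity yields $\mathcal{H}^{b}(Z,t)=0$, and letting $t\downarrow t^{*}$ gives $\dim_{H}Z\leq t^{*}$.

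For the lower bound $\dim_{H}Z\geq t^{*}$, fix $t<t^{*}$ with $2\rho:=P_{Z}(G_{1},-t\Phi)>0$, and use Proposition \ref{3.1p}(3) to select $m$ with $P_{Z_{m}}(G_{1},-t\Phi)>\rho$. Choose $\varepsilon<\rho/t$ and $\delta<\delta_{0}(\varepsilon)$ small enough that $P'_{Z_{m}}(G_{1},-t\Phi,\delta)>\rho>t\varepsilon$. For any metric-ball cover $\{B(x_{i},r_{i})\}$ of $Z_{m}$ with $x_{i}\in Z_{m}$ and $\sup_{i}r_{i}$ small, define $n_{i}$ as the largest integer with $(\delta/m)\,e^{-n(\lambda_{n}(x_{i})+\varepsilon)}\geq r_{i}$; the first inclusion in Lemma \ref{7.1l} gives $B(x_{i},r_{i})\subset B_{n_{i}}(x_{i},\delta)$, while maximality of $n_{i}$ yields
\begin{equation*}
r_{i}^{t}>(\delta/m)^{t}e^{-t(n_{i}+1)(\lambda_{n_{i}+1}(x_{i})+\varepsilon)}\geq C\,e^{-tn_{i}\varepsilon}e^{-tn_{i}\lambda_{n_{i}}(x_{i})}
\end{equation*}
for a constant $C>0$ independent of $i$, once the off-by-one is absorbed via the uniform bound on $|(n+1)\lambda_{n+1}(x)-n\lambda_{n}(x)|$. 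Summing and recognising the right-hand side as the $M'$-sum at parameter $t\varepsilon$ gives $\sum_{i}r_{i}^{t}\geq C\cdot M'(Z_{m},G_{1},-t\Phi,t\varepsilon,\delta,N)$ with $N=\min_{i}n_{i}\to\infty$ as $\sup r_{i}\to 0$. Since $t\varepsilon<P'_{Z_{m}}(G_{1},-t\Phi,\delta)$ forces $m'(Z_{m},G_{1},-t\Phi,t\varepsilon,\delta)=\infty$, the right-hand side diverges, so $\mathcal{H}^{b}(Z_{m},t)=\infty$, $\dim_{H}Z\geq t$, and letting $t\uparrow t^{*}$ finishes the proof.

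The main obstacles will be the $x$-dependence of $\eta(x,\varepsilon)$ in Lemma \ref{7.1l}, which the level-set decomposition $Z=\bigcup Z_{m}$ resolves, and the off-by-one discrepancy between the cover index $n_{i}$ and the exponent $n_{i}+1$ in the lower-bound estimate for $r_{i}$. The latter is tamed by the uniform bound $|(n+1)\lambda_{n+1}(x)-n\lambda_{n}(x)|\leq\max_{0\leq j\leq k-1}\|\varphi_{j}\|$ that follows from the refinement identity $\sum_{|w|=n+1}S_{w}\Phi(x)=k\sum_{|w|=n}S_{w}\Phi(x)+\sum_{|w|=n}\sum_{j=0}^{k-1}\varphi_{j}(f_{\overline{w}}(x))$; everything else is routine bookkeeping with the Carath\'{e}odory-Pesin critical values.
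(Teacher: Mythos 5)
Your proposal follows essentially the same route as the paper: decompose $Z$ into level sets $Z_m$ on which the parameters in Lemma \ref{7.1l} are uniform, use the two inclusions of that lemma to translate between Bowen-ball covers (computing $m'$) and metric-ball covers (computing $\mathcal{H}^b$), and then read off the critical exponents. The only cosmetic deviations are that you combine into one decomposition what the paper splits into two (a $\lambda_n>\alpha$ level set for the upper bound and an $\eta=e^{-m}$ level set for the lower bound), and you absorb the off-by-one with the explicit refinement identity rather than the paper's ratio bound $s_n/s_{n+1}\le e^{\beta+\varepsilon}$; neither changes the substance of the argument.
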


\begin{proof}
Firstly we show $dim_{H} Z \leq t^{*}$. Given $m \geq 1$, and let
\[
Z_{m}=\{x \in Z: \lambda_{n}(x)> \alpha~~\text{for all}~n\geq m\}
\]
and then it is obvious to examine that $Z=\bigcup_{m=1}^{\infty} Z_{m}$. For any $t > t^{*}$, we can get $P_{Z}(G_{1},-t \Phi)< 0$. Thus there exists $\varepsilon >0$ with $P_{Z}(G_{1},-t \Phi )<-t\varepsilon$ and by Lemma \ref{7.1l} there exists $\delta_{0}=\delta_{0}(\varepsilon)$ such that for all $x \in Z_{m},~0<\delta<\delta_{0}$, and $n \geq m$, we have
\begin{equation}\label{7.6}
diam B_{n} (x,\delta)\leq 2 \delta e^{-n(\lambda_{n}(x)-\varepsilon)} \leq 2 \delta e^{-n(\alpha-\varepsilon)}.
\end{equation}
For given $N\geq m$ and $0< \delta < \delta_{0}$, we have
\begin{align*}
M'(Z_{m},G_{1}, -t \Phi,-t\varepsilon,\delta,N)
&=\inf\limits_{\mathcal{G} }\left\{\sum\limits_{B_{n}(x,\delta)\in\mathcal{G}}\exp \bigg(-(-t\varepsilon) \cdot n-t  \frac{1}{k^{n}} \sum_{|w|=n}S_{w}\Phi(x)\bigg)\right\}\\
&=\inf\limits_{\mathcal{G}}\left\{\sum\limits_{B_{n}(x,\delta)\in\mathcal{G}}\exp \bigg(-t \cdot n(\lambda_{n}(x)-\varepsilon)\bigg)\right\}\\
&\geq \inf\limits_{\mathcal{G}}\left\{\sum\limits_{B_{n}(x,\delta)\in\mathcal{G}} (\frac{1}{2\delta} diam B_{n}(x,\delta))^{t}\right\} \\
&\geq \inf\limits_{\mathcal{P}(Z_{m},2\delta e^{-N(\alpha-\varepsilon)})} \left\{\sum_{U_{i} \in \mathcal{P}(Z_{m},2\delta e^{-N(\alpha-\varepsilon)})} (\frac{1}{2\delta} diam U_{i})^{t}\right\} \\
&=(2\delta)^{-t} \mathcal{H} (Z_{m},t,2\delta e^{-N(\alpha-\varepsilon)}), \\
\end{align*}
where $\mathcal{G} $ covers $Z_{m}$ satisfying for any $B_{n}(x,\delta) \in \mathcal{G}$, ~$n \geq N$ and $\mathcal{P}(Z_{m},2\delta e^{-N(\alpha-\varepsilon)})$ denotes the collection of open covers $\{ U_{i}\}$ of $Z_{m}$ for which $diam U_{i} < 2\delta e^{-N(\alpha-\varepsilon)}$ for all $i$.\\
%Thus we have
%\begin{align*}
%M'(Z_{m},G_{1}, -t \Phi,-t\varepsilon,\delta,N)
 %\geq (2\delta)^{-t}  \mathcal{H} (Z_{m},t,2\delta e^{-N(\alpha-\varepsilon)}).
%\end{align*}
Take the limit as $N \rightarrow \infty$ and we get
\begin{equation}\label{7.7}
m'(Z_{m},G_{1}, -t \Phi,-t\varepsilon,\delta)  \geq (2\delta)^{-t} \mathcal{H}(Z_{m},t).
\end{equation}
Moreover, we have
\[
-t\varepsilon > P_{Z}(G_{1},-t \Phi )\geq P_{Z_{m}}(G_{1},-t \Phi)=\lim_{\delta \rightarrow 0} P'_{Z_{m}}(G_{1},-t \Phi, \delta),
\]
and for sufficiently small $\delta >0$, we have $-t \varepsilon > P'_{Z_m}(G_{1},-t \Phi,\delta )$.
Hence $ \mathcal{H}(Z_{m},t)=0$ by (\ref{7.7}), which means $ dim_{H}(Z_{m}) \leq t$. Then taking the union over all $m$ gives $dim_{H}(Z) \leq  t $ for all $t>t^{*}$. Hence $dim_{H}(Z) \leq t^{*}$.

Next we show the opposite inequality, $dim_{H}(Z) \geq t^{*}$. Consider $t <t^{*}$, and then we manage to prove $dim_{H}(Z) \geq t$. Assume $t>0$, otherwise there is nothing left to prove. By the Proposition \ref{7.1p}, we know $t^{*}$ is the unique real number such that $P_{Z}(G_{1},-t^{*} \Phi)=0$. Since the pressure function $P_{Z}(G_{1},-t\Phi)$ with respect to $t$ is decreasing, we have $P_{Z}(G_{1},-t \Phi)>0$. Hence we can choose $\varepsilon >0$ satisfying
\[
P_{Z}(G_{1},-t\Phi )>t\varepsilon > 0.
\]
Let $\delta_{0}=\delta_{0}(\varepsilon)$ be given in Lemma \ref{7.1l}. For given $n\in \mathbb{N},~m\geq 1$, consider the set
\[
Z_{m}=\{x \in Z: (\ref{7.3})~holds~with~\eta=e^{-m}~, ~0<\delta<\delta_{0}\}.
\]
It is easy to observe that $Z=\bigcup\limits_{m=1}^{\infty} Z_{m}$ and hence $P_{Z}(G_{1},-t\Phi)=\sup\limits_{m\geq 1} P_{Z_m}(G_{1},-t \Phi)$. Then there exists $m\in \mathbb{N}$ such that $P_{Z_m}(G_{1},-t\Phi)> t\varepsilon$, and we can choose sufficiently small $\delta$ with $0<\delta<\delta_{0}$ such that
\begin{equation}\label{7.8}
P'_{Z_m}(G_{1},-t\Phi,\delta)> t\varepsilon .
\end{equation}
Let $\beta=\max_{i} \{\sup_{x \in X}\log a_{i}(x)\}<\infty $. For any $n \in \mathbb{N},~x\in X$, denote $s_{n}(x)=e^{-m} \delta e^{-n(\lambda_{n}(x)+\varepsilon)}$,
and observe that
\[
\frac{s_{n}(x)}{s_{n+1}(x)}=\frac{e^{-n(\lambda_{n}(x)+\varepsilon)}}{e^{-(n+1)(\lambda_{n+1}(x)+\varepsilon)}}
=e^{(n+1)\lambda_{n+1}(x)-n\lambda_{n}(x)+\varepsilon}
\leq e^{\beta+\varepsilon}.
\]
Furthermore, for given $x \in Z_{m}$ and $r>0$ enough small, there exists $n=n(x,r) \in \mathbb{N}$ such that
\begin{equation}\label{7.9}
s_{n}(x)e^{-(\beta+\varepsilon)}\leq s_{n+1}(x) \leq r \leq s_{n}(x)=e^{-m} \delta e^{-n(\lambda_{n}(x)+\varepsilon)}.
\end{equation}
For this value $n$, by Lemma \ref{7.1l} we have
\[
B(x,r)\subset B_{n}(x,\delta).
\]
Then given any $\{B(x_{i},r_{i})\}$ such that $Z_{m} \subset \bigcup B(x_{i},r_{i})$, we can get $Z_{m} \subset \bigcup  B_{n_{i}}(x_{i},\delta)$, where $~n_{i}=n_{i}(x_{i},r_{i})$ satisfies (\ref{7.9}).

Moreover, for every $n \in \mathbb{N}$ and $x \in X$, we have $\lambda_{n}(x) \leq \beta$ and thus $s_{n}(x) \geq \delta e^{-\big(m+n(\beta+\varepsilon)\big)}$.
From (\ref{7.9}), it yields that for $n=n(x,r)$ we have
\[
\delta e^{-\big(m+(n+1)(\beta+\varepsilon)\big)} \leq r,
\]
and then
\[
n \geq \frac{-\log r +\log \delta -m}{\beta+\varepsilon}-1.
\]
Denote $N:=N(r,\delta)=\frac{-\log r +\log \delta -m}{\beta+\varepsilon}-1$ and note that for every fixed $\delta>0$, we have
$\lim_{r\rightarrow 0}N(r,\delta)=\infty$.

Therefore, for all $r>0,~0<\delta <\delta_{0}$, by applying (\ref{7.9}) we can get
\begin{align*}
\mathcal{H}^{b}(Z_{m},t,r)
&=\inf_{\mathcal{P}^{b}(Z_{m},r)}\left\{\sum\limits_{B(x_{i},r_{i})\in \mathcal{P}^{b}(Z_{m},r)} (2r_{i})^{t}\right\}\\
&\geq \inf\limits_{\mathcal{G}' }\left\{\sum\limits_{B_{n_{i}}(x_{i},\delta)\in\mathcal{G}'}\bigg(2 e^{-(\beta+\varepsilon)}  s_{n_{i}}({x_{i}})\bigg)^{t}\right\}\\
&=(2\delta)^{t}e^{-t(m+\beta+\varepsilon)} \inf\limits_{\mathcal{G}'}\left\{\sum\limits_{B_{n_{i}}(x_{i},\delta)\in\mathcal{G}'}  e^{-tn_{i} (\lambda_{n_{i}}(x_{i})+\varepsilon)}\right\} \\
&=(2\delta)^{t}e^{-t(m+\beta+\varepsilon)}  \inf\limits_{\mathcal{G}'}\left\{\sum\limits_{B_{n_{i}}(x_{i},\delta)\in\mathcal{G}'}  \exp \bigg(-t\varepsilon n_{i}-t \frac{1}{k^{n_{i}}}\sum_{|w|=n_{i}} S_{w}\Phi(x_{i})\bigg)\right\} \\
&\geq (2\delta)^{t}e^{-t(m+\beta+\varepsilon)}  \inf\limits_{\mathcal{G} }\left\{\sum\limits_{B_{n}(x,\delta)\in\mathcal{G}}  \exp \bigg(-t\varepsilon n-t \frac{1}{k^{n}}\sum_{|w|=n} S_{w}\Phi(x)\bigg)\right\} \\
&=(2\delta)^{t}e^{-t(m+\beta+\varepsilon)} M'(Z_{m},G_{1},-t\Phi,t\varepsilon,\delta,N) , \\
\end{align*}
where $\mathcal{P}^{b}(Z_{m},r)$ denotes the collection of countable open balls covers $\{B(x_{i},r_{i})\}$ of $Z_{m}$ for which $r_{i} < r$ for all $i$,
$\mathcal{G}'$ denotes the collection of countable Bowen balls covers $\{B_{n_{i}}(x_{i},\delta)\}_{i=1}^{\infty}$ of $Z_{m}$ with for any $B_{n_{i}}(x_{i},\delta) \in \mathcal{G}', n_{i}\geq N$, ~$B(x_{i},r_{i}) \subset B_{n_{i}}(x_{i},\delta)$
and $\mathcal{G}$ denotes the collection of countable Bowen balls covers $\{B_{n}(x,\delta)\}$ of $Z_{m}$ with for any $B_{n}(x,\delta) \in \mathcal{G}', n\geq N$.\\
Consequently, we can get
\[
\mathcal{H}^{b}(Z_{m},t,r)\geq (2\delta)^{t}e^{-t(m+\beta+\varepsilon)} M'(Z_{m},G_{1},-t\Phi,t\varepsilon,\delta,N).
\]
Taking $r\rightarrow 0$, it is easy to see that the quantity on the right goes to $\infty$ by (\ref{7.8}), and thus we have $\mathcal{H}^{b}(Z_{m},t)=\infty$.
Therefore,
\[
dim_{H}Z \geq dim_{H}Z_{m}\geq t,
\]
and since $t< t^{*}$ is arbitrary, this establishes the Lemma.
\end{proof}

\begin{proof}[Proof of Theorem 4.1]
 Choose a decreasing  positive numbers sequence $\{\alpha_{m}\}$ with converging to $0$, and set $Z_{m}= \mathcal{A}((\alpha_{m},\infty))\bigcap Z$, then Lemma \ref{7.2l} applies to $Z_{m}$ and we can get $Z=\bigcup_{m=1}^{\infty} Z_{m}$. Let $t_{m}$ be the unique real number with
\[
P_{Z_{m}}(G_{1},-t_{m} \Phi )=0
\]
for every $m$ and whose existence and uniqueness are guaranteed by Proposition \ref{7.1p}. Hence, by Lemma \ref{7.2l} we obtain
\[
dim_{H} Z_{m}=t_{m}.
\]
Denote $t^{*}=\sup_{m} t_{m}$, then $dim_{H} Z=t^{*}$. Then it is left to prove
\begin{equation}\label{7.10}
t^{*}=\sup\{ t\geq 0: P_{Z}(G_{1},-t \Phi)>0\}.
\end{equation}
Considering $t\geq 0$, we have
\[
P_{Z}(G_{1},-t \Phi )=\sup_{m}P_{Z_{m}}(G_{1},-t \Phi ).
\]

 On the one hand, for any $t < t^{*}$, there exists $t_{m}$ with $t<t_{m}$ and hence $P_{Z_{m}}(G_{1},-t \Phi )>0$. Then $t \in \{ t\geq 0: P_{Z}(G_{1},-t \Phi )>0\}$ and thus $t \leq \sup\{ t\geq 0: P_{Z}(G_{1},-t\Phi )>0\}$. Therefore, $t^{*} \leq \sup\{ t\geq 0: P_{Z}(G_{1},-t\Phi )>0\}$.

 On the other hand, for arbitrary $t < \sup\{ t\geq 0: P_{Z}(G_{1},-t \Phi)>0\}$, there exists $t_{j}>t$ with $P_{Z}(G_{1},-t_{j}\Phi )>0$. Then there exists $Z_{m}$ such that $P_{Z_{m}}(G_{1},-t_{j} \Phi )>0$ and thus $t_{j} < t_{m}$. It follows that $t<t_{j}<t_{m}<t^{*}$.
So  $\sup\{ t\geq 0: P_{Z}(G_{1},-t\Phi )>0\} \leq t^{*}$.
This establishes (\ref{7.10}).

Finally, it can be yielded from (\ref{7.10}) and continuity of $t \mapsto P_{Z}(G_{1}, -t \Phi)$ that $P_{Z}(G_{1}, -t^{*} \Phi)=0$. If $Z \subset \mathcal{A}((\alpha,\infty))$ for some $\alpha >0$, then Proposition \ref{7.1p} guarantees that $t^{*}$ is in fact the unique root of Bowen's equation.
\end{proof}

{\bf Acknowledgement.}  The work was supported by National Natural Science Foundation of China (grant no.11771149, 11671149) and Guangdong Natural Science Foundation 2018B0303110005.

\bibliographystyle{amsplain}

\end{document}